\theoremstyle{plain}
\newtheorem{thm}{\protect\theoremname}[section]
\theoremstyle{plain}
\newtheorem{lem}[thm]{\protect\lemmaname}
\theoremstyle{remark}
\newtheorem{rem}[thm]{\protect\remarkname}
\theoremstyle{plain}
\newtheorem{prop}[thm]{\protect\propositionname}
\providecommand{\lemmaname}{Lemma}
\providecommand{\propositionname}{Proposition}
\providecommand{\remarkname}{Remark}
\providecommand{\theoremname}{Theorem}
\begin{document}

\title{berry-esseen type estimate and return sequence for parabolic iteration
in the upper half-plane}

\author{octavio arizmendi, mauricio salazar, 
jiun-chau wang}

\date{November 27, 2018}
\begin{abstract}
We answer the question of finding a Berry-Esseen type theorem for
the convergence rate in monotone central limit theorem. When the underlying
measure is singular to Lebesgue measure, this central limit process
is viewed as an infinite measure-preserving dynamical system and we
prove that it has a regularly varying return sequence of index $1/2$.
The main tool in our proofs is the complex analysis techniques from
free probability.
\end{abstract}

\subjclass[2000]{46L53; 37A40}

\keywords{Parabolic iteration, monotone convolution, Berry-Esseen estimate,
infinite ergodic theory, return sequence}

\address{O. Arizmendi and M. Salazar: Centro de Investigaci\'{o}n en Matem\'{a}ticas, Apdo. Postal
402, Guanajuato, Gto. 36000, Mexico; J.-C. Wang: Department of Mathematics
and Statistics, University of Saskatchewan, Saskatoon, Saskatchewan
S7N 5E6, Canada}

\email{octavius@cimat.mx; maurma@cimat.mx; jcwang@math.usask.ca}
\maketitle

\section{Introduction}

Let $\mathbb{C}^{+}=\left\{ x+iy:y>0\right\} $, and let $F:\mathbb{C}^{+}\rightarrow\mathbb{C}^{+}$
be an analytic map satisfying $F(iy)/iy\rightarrow1$ as $y\rightarrow\infty$.
This paper studies the dynamics of the parabolic iteration $\left\{ F^{\circ n}\right\} _{n=1}^{\infty}$
from two different perspectives. 

First, by Proposition 5.1 in \cite{BVunbdd}, there is a one-to-one
correspondence between the set of all analytic self-maps $F$ on $\mathbb{C}^{+}$
with $\lim_{y\rightarrow\infty}F(iy)/iy=1$ and that of all probability
measures $\mu$ on $\mathbb{R}$ in the sense that 
\begin{equation}
\frac{1}{F(z)}=\int_{\mathbb{R}}\frac{1}{z-t}\,d\mu(t),\quad z\in\mathbb{C}^{+}.\label{Fdef}
\end{equation}
Accordingly, we will denote the function $F$ by $F_{\mu}$ and call
it the \emph{$F$-transform} of $\mu$ from now on. Moreover, by Muraki's
monotone probability theory \cite{Muraki1,Muraki2}, each $n$-fold
iteration $F_{\mu}^{\circ n}$ corresponds to a unique probability
measure $\mu^{\triangleright n}$ (called the \emph{$n$-th monotone
convolution power} of $\mu$) on $\mathbb{R}$ such that 
\begin{equation}
\frac{1}{F_{\mu}^{\circ n}(z)}=\int_{\mathbb{R}}\frac{1}{z-t}\,d\mu^{\triangleright n}(t),\quad z\in\mathbb{C}^{+},\quad n\geq1.\label{MonotonePowers}
\end{equation}
Indeed, the measure $\mu^{\triangleright n}$ may be interpreted as
the spectral distribution of a sum of $n$ self-adjoint operators
that are monotonically independent and identically distributed according
to the measure $\mu$. (See Section 2.1 for a review of the construction
of monotonically independent random variables.) Of course, at the
level $n=1$ one has $\mu^{\triangleright1}=\mu$ and the equation
(\ref{MonotonePowers}) reduces to (\ref{Fdef}), the definition of
$F_{\mu}$. 

We assume the measure $\mu$ in (\ref{Fdef}) has \emph{zero mean}
and \emph{unit variance} throughout this paper. Let $\mu_{n}$ be
the dilation defined by $\mu_{n}(A)=\mu^{\triangleright n}(\sqrt{n}A)$
for all sets $A$ in the Borel $\sigma$-field $\mathcal{B}$ on $\mathbb{R}$.
Then the measures $\mu_{n}$ converge weakly on $\mathbb{R}$ to the
absolutely continuous probability measure 
\[
d\gamma(t)=\frac{1}{\pi\sqrt{2-t^{2}}}\,d\lambda(t),\quad-\sqrt{2}<t<\sqrt{2},
\]
where $\lambda$ denotes the Lebesgue measure on $\mathbb{R}$. This
is known as the \emph{monotone central limit theorem} (Monotone CLT)
\cite{Muraki2}. Since weak convergence to an absolutely continuous
measure implies the uniform convergence of the corresponding cumulative
distribution functions, it is natural to prove the following quantitative
version of Monotone CLT:
\begin{thm}
We write $F_{\mu}$ in its Nevanlinna integral representation 
\[
F_{\mu}(z)=z+\int_{\mathbb{R}}\frac{1}{t-z}\,d\nu(t),\qquad z\in\mathbb{C}^{+}.
\]
If the measure $\nu$ has the first absolute moment $c\in(0,+\infty)$,
then one has the Kolmogorov distance 
\[
\sup_{x\in\mathbb{R}}\left|\mu_{n}\left((-\infty,x])\right)-\gamma\left((-\infty,x])\right)\right|\leq71\sqrt[4]{c}\,n^{-1/8}
\]
for sufficiently large $n$.
\end{thm}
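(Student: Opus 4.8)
The plan is to transfer the problem to the level of Cauchy transforms and exploit the parabolic dynamics of $F_\mu$. Write $G_\mu=1/F_\mu$ for the Cauchy transform of $\mu$, and recall that $F_{\mu}^{\circ n}$ governs $\mu^{\triangleright n}$ via \eqref{MonotonePowers}. After rescaling by $\sqrt n$, the $F$-transform of $\mu_n$ is $F_n(z):=F_\mu^{\circ n}(\sqrt n z)/\sqrt n$, and the $F$-transform of the limit law $\gamma$ is the explicit function $F_\gamma(z)=(z+\sqrt{z^2-2})/2$ solving $F_\gamma(z)+1/F_\gamma(z)=z$ (this is essentially the semicircle-type fixed-point equation for the arcsine law $\gamma$). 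The first step is therefore to establish a quantitative estimate of the form $|F_n(z)-F_\gamma(z)| \le C(c)\,\varphi(n,\Im z)$ uniformly on a suitable region of $\mathbb C^+$; the natural vehicle for this is the semigroup-type iteration formula for $F_\mu^{\circ n}$, using the Nevanlinna representation $F_\mu(z)=z+\int (t-z)^{-1}\,d\nu(t)$ to see that $F_\mu(z)=z - m/z + O(c/\Im z \cdot 1/z)$ for large $\Im z$, where $m=\nu(\mathbb R)=1$ by the unit-variance assumption (one checks $\mathrm{Var}(\mu)=\nu(\mathbb R)$ and mean zero $=\int t\,d\nu$). Iterating $n$ times and tracking the error through $\nu$'s first absolute moment $c$ yields, after the $\sqrt n$-dilation, a bound like $|F_n(z)-F_\gamma(z)|\lesssim \sqrt c\, n^{-1/4}$ on a region $\{\Im z \ge n^{-1/4}\}$ or similar.

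The second step is a Berry–Esseen-style smoothing argument: convert the Cauchy-transform estimate into a Kolmogorov-distance estimate. Since $\Im G_\mu(x+i\varepsilon)/\pi$ is the density of the $\varepsilon$-Poisson smoothing of $\mu$, the difference of smoothed distribution functions of $\mu_n$ and $\gamma$ at height $\varepsilon$ is controlled by $\int |G_n(x+i\varepsilon)-G_\gamma(x+i\varepsilon)|\,dx$, hence by the bound from Step 1 evaluated at $\Im z=\varepsilon$. One then removes the smoothing: $\gamma$ has a bounded density away from its endpoints and is Hölder-$1/2$ near $\pm\sqrt 2$, so $|\gamma((-\infty,x+\varepsilon])-\gamma((-\infty,x])|\lesssim\sqrt\varepsilon$, which is the source of the $\sqrt[4]{\,\cdot\,}$ and the degraded exponent. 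Optimizing $\varepsilon$ against the two competing errors — the transform error $\sim\sqrt c\,n^{-1/4}/\varepsilon$ (from integrating the Cauchy-kernel estimate and dividing by the smoothing width) and the smoothing error $\sim\sqrt\varepsilon$ — balances at $\varepsilon\sim (\sqrt c\, n^{-1/4})^{2/3}$, producing the exponent $n^{-1/8}$ and the factor $\sqrt[4]{c}$, with the explicit constant $71$ coming from carefully bookkeeping the constants in the smoothing inequality and the estimates on $F_\gamma$.

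The main obstacle is Step 1: controlling the iteration $F_\mu^{\circ n}$ uniformly \emph{down to} the critical height $\varepsilon\sim n^{-1/4}$ rather than merely for $\Im z$ large. Near the real axis the maps $F_\mu^{\circ k}$ are not small perturbations of the identity, and the orbit of a point $z$ under $F_\mu$ drifts (the parabolic nature: $F_\mu$ has its Denjoy–Wolff behavior at $\infty$). The resolution I expect to need is to run the estimate in a \emph{region that shrinks with $n$}: one shows by induction that $F_\mu^{\circ k}(\sqrt n z)$ stays in a fixed Stolz-type region for all $k\le n$ when $\Im z\gtrsim n^{-1/4}$, using the fact that each application of $F_\mu$ increases the imaginary part (a standard property of $F$-transforms) by an amount comparable to $m/|F_\mu^{\circ k}|$, so the cumulative increase over $n$ steps is $\asymp\sqrt n$, matching the $\sqrt n$-dilation. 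Propagating the error $|F_\mu^{\circ k}-F_\gamma^{\circ k}|$ through a Gronwall-type recursion, with increments bounded using $c$, then closes the argument. A secondary technical point is verifying that the subordination identity $F_\mu^{\circ n}=F_{\mu}\circ F_{\mu}^{\circ(n-1)}$ and the Nevanlinna bounds interact cleanly — this is routine but must be done with explicit constants to reach the stated $71$.
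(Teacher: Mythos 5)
Your overall architecture (a Cauchy-transform estimate plus a smoothing/Bai-type inequality using the H\"older-$1/2$ behaviour of $\gamma$ near $\pm\sqrt{2}$) matches the paper's second step, but the heart of the matter --- your Step 1 --- is left unresolved, and the way you propose to resolve it would not work as described. You plan to propagate the error through the $n$-fold iteration by a ``Gronwall-type recursion'' on $|F_\mu^{\circ k}-\cdot|$; you yourself flag this as the main obstacle and then only sketch that it ``closes the argument.'' The difficulty is real: near the edge of the spectrum the maps are not small perturbations of the identity and the per-step error gets amplified, so a naive recursion does not yield the additive, $n$-term error budget you need. The paper's proof circumvents this entirely by a conjugation (an Abel-type coordinate): setting $\psi_1(z)=z^2$, $\psi_2(z)=\sqrt{z}$, one gets the \emph{exact} identity
\[
F_{\mu_n}(z)^2 \;=\; z^2-2+\sum_{j=0}^{n-1}r_n\bigl(F_n^{\circ j}(z)\bigr),
\qquad
r_n(w)=\frac{2}{n}\int_{\mathbb{R}}\frac{t}{t-\sqrt{n}\,w}\,d\nu(t)+\frac{1}{n}\Bigl[\int_{\mathbb{R}}\frac{1}{t-\sqrt{n}\,w}\,d\nu(t)\Bigr]^{2},
\]
so in the squared coordinate the dynamics is a pure translation by $-2$ plus errors that accumulate \emph{additively}, each term bounded uniformly by $2c/(n^{3/2}\Im z)+1/(n^{2}(\Im z)^{2})$ because $\Im F_n^{\circ j}(z)\ge\Im z$. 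This gives $|F_{\mu_n}(z)^2-(z^2-2)|\lesssim 2c\,n^{-1/2}/y$ on the line $\Im z=y$, with no Gronwall amplification; choosing $y=2\sqrt{c}\,n^{-1/4}$ and feeding this into Bai's inequality together with an explicit lemma bounding $\int_{\mathbb{R}}\bigl|(z^2-2+\varepsilon(z))^{-1/2}-(z^2-2)^{-1/2}\bigr|\,dx\le 39\sqrt{y}$ yields $71\,c^{1/4}n^{-1/8}$. Without some substitute for this conjugation trick, your Step 1 is a genuine gap, not a routine technicality.

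There is also a quantitative inconsistency in your Step 2 that signals the scheme, as written, cannot reach the stated rate: balancing your claimed transform error $\sqrt{c}\,n^{-1/4}/\varepsilon$ against the smoothing error $\sqrt{\varepsilon}$ gives $\varepsilon\sim c^{1/3}n^{-1/6}$ and a final bound of order $n^{-1/12}$, not $n^{-1/8}$. In the paper the correct balance is different: both the integrated transform error and the smoothing error are of order $\sqrt{y}$, \emph{subject to the constraint} that the additive error at the squared level, $\sim c\,n^{-1/2}/y$, is at most $3y/2$; that constraint (not an optimization of two competing powers of $\varepsilon$) forces $y\sim\sqrt{c}\,n^{-1/4}$ and hence the exponent $1/8$ and the factor $\sqrt[4]{c}$. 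Finally, a small but real slip: the $F$-transform of the arcsine law is $F_\gamma(z)=\sqrt{z^2-2}$, not $(z+\sqrt{z^2-2})/2$, and it satisfies no fixed-point equation of the kind you invoke; the comparison object in the proof is the explicit function $\sqrt{z^2-2}$, perturbed inside the square root.
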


This result is an analogue of the Berry-Esseen estimate for the classical
CLT. Here the first absolute moment $c$ is acting like the third absolute moment in the classical situation, and
$c<+\infty$ if $\mu$ is further assumed to have finite fourth moment.
Unlike the classical Berry-Esseen theorem, the best convergence rate
here is at the order of $O(n^{-1/4})$ under a finite sixth moment condition and it cannot be improved without additional assumptions on the measure $\mu$ (see Theorem 3.1 and Remark 3.2). The case of $c=0$ (i.e., $\nu=\delta_{0}$, the point mass at zero) corresponds to $\mu=(\delta_{-1}+\delta_{1})/2$
and the convergence rate is also $O(n^{-1/4})$. 

Our second perspective on $\left\{ F_{\mu}^{\circ n}\right\} _{n=1}^{\infty}$
comes from the fact that when $\mu$ is singular to the Lebesgue measure
$\lambda$ on $\mathbb{R}$, the boundary limit
\[
T_{\mu}x=\lim_{y\rightarrow0^{+}}F_{\mu}(x+iy)
\]
exists in $\mathbb{R}$ for $\lambda$-almost every $x\in\mathbb{R}$
and the transformation $T_{\mu}$ preserves the measure $\lambda$
in the sense that $\lambda\circ T_{\mu}^{-1}=\lambda$ on the $\sigma$-field
$\mathcal{B}$ \cite{Letac}. Hence the dynamical system $\left(\mathbb{R},\mathcal{B},T_{\mu}\right)$
is an object of study in infinite ergodic theory \cite{AaronsonBook}. 

It was proved in \cite{Wang} that $T_{\mu}$ is \emph{conservative}
in the sense that for any $A\in\mathcal{B}$ with $\lambda(A)>0$,
one has the limit 
\[
S_{n}^{A}(x)=\#\left\{ k\in\mathbb{Z}\cap[0,n-1]:T_{\mu}^{\circ k}x\in A\right\} \rightarrow+\infty\quad(n\rightarrow\infty)
\]
for $\lambda$-a.e. $x\in A$. In other words, $T_{\mu}$ has the
recurrence behavior that any typical orbit $\{T_{\mu}^{\circ n}x\}_{n=1}^{\infty}$
starting from $x\in A$ will return to $A$ infinitely often. 

Given such a nature of the system $\left(\mathbb{R},\mathcal{B},T_{\mu}\right)$, it is reasonable to investigate the asymptotic growth rate for the
occupation times $S_{n}^{A}(x)$, and to ask further how this growth
rate may depend on the set $A$, the point $x$, or the underlying
measure $\mu$ (cf. \cite{AaronsonBook}).

Aaronson proved that a normalization sequence for $S_{n}^{A}$, or, more generally, for the ergodic sum 
\[
S_{n}(f)=\sum_{k=0}^{n-1}f\circ T_{\mu}^{\circ k}
\]
of any $f\in L_{1}(\lambda)$ exists only at the \emph{dual} level
\cite{AaronsonPaper}. His result implies that there are constants
$a_{n}(T_{\mu})>0$ satisfying a weak form of Birkhoff type ergodic theorem:
\[
\frac{1}{a_{n}(T_{\mu})}\sum_{k=0}^{n-1}\widehat{T_{\mu}}^{k}f\rightarrow\int_{\mathbb{R}}f\,d\lambda\quad\text{a.e. as }n\rightarrow\infty,\quad\forall f\in L_{1}(\lambda),
\]
where the Perron-Frobenius operator $\widehat{T_{\mu}}:L_{1}(\lambda)\rightarrow L_{1}(\lambda)$
is defined through the duality 
\[
\int_{\mathbb{R}}\widehat{T_{\mu}}f\cdot g\,d\lambda=\int_{\mathbb{R}}f\cdot g\circ T_{\mu}\,d\lambda,\quad f\in L_{1}(\lambda),\;g\in L_{\infty}(\lambda).
\]
The sequence $a_{n}(T_{\mu})$ is called a \emph{return sequence}
of $T_{\mu}$ and is unique up to asymptotic equivalence. Aaronson
also showed that $a_{n}(T_{\mu})\sim\pi^{-1}\sqrt{2n}$ when $\mu$
has bounded support (see also \cite{LetacM}). We obtain an extension
of this result, saying that the $\pi^{-1}\sqrt{2n}$ growth rate is
in fact universal in the set of measures attracted to the arcsine
law $\gamma$ in Monotone CLT. (This set is the same as the normal domain
of attraction of the standard Gaussian law in the classical CLT \cite{Wang}).
\begin{thm}
We have $a_{n}(T_{\mu})\sim\pi^{-1}\sqrt{2n}$ among all singular distributions
$\mu$ with zero mean and unit variance.
\end{thm}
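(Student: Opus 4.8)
The plan is to identify $a_{n}(T_\mu)$, up to asymptotic equivalence, with the partial sums $\sum_{k<n}\langle\widehat{T_\mu}^{\,k}P_{i},P_{i}\rangle$ of the dual operator applied to a Poisson kernel, and then evaluate those sums through the iterates of $F_\mu$ at the point $i$. Here $P_{z}(x)=\tfrac1\pi\operatorname{Im}\tfrac1{x-z}$ is the Poisson kernel of $z\in\mathbb{C}^{+}$ (a probability density, $\int_{\mathbb{R}}P_{z}\,d\lambda=1$) and $\langle f,g\rangle=\int_{\mathbb{R}}fg\,d\lambda$. The first ingredient is the identity $\widehat{T_\mu}^{\,k}P_{z}=P_{F_\mu^{\circ k}(z)}$ in $L_{1}(\lambda)$, a standard fact about boundary restrictions of analytic self-maps of $\mathbb{C}^{+}$ (cf. \cite{Letac}): for bounded $g$, the harmonic extension of $g\circ T_\mu$ equals the harmonic extension of $g$ composed with $F_\mu$ (both are bounded harmonic functions with the same non-tangential boundary values a.e.), and since $\langle P_{z},h\rangle$ is the harmonic extension of $h$ at $z$ one gets $\langle\widehat{T_\mu}P_{z},g\rangle=\langle P_{z},g\circ T_\mu\rangle=\langle P_{F_\mu(z)},g\rangle$, whence the claim on iterating. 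Combined with the elementary formula $\langle P_{a},P_{b}\rangle=\tfrac1\pi\,\frac{\operatorname{Im}a+\operatorname{Im}b}{|a-\bar b|^{2}}$, this yields, writing $u_{k}:=F_\mu^{\circ k}(i)$,
\[
\langle\widehat{T_\mu}^{\,k}P_{i},P_{i}\rangle=\frac1\pi\,\frac{\operatorname{Im}u_{k}+1}{|u_{k}+i|^{2}}.
\]

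The second, and main, step is to prove $\operatorname{Im}F_\mu^{\circ k}(i)\sim\sqrt{2k}$ and $\operatorname{Re}F_\mu^{\circ k}(i)=o(\sqrt k)$, using only that the Nevanlinna measure $\nu$ has finite total mass $\nu(\mathbb{R})$, equal to the variance $1$ of $\mu$ — no higher-moment hypothesis. From $u_{k+1}=u_{k}+\int_{\mathbb{R}}(t-u_{k})^{-1}d\nu(t)$ one checks, by analysing the two real recursions, that $\operatorname{Im}u_{k}$ is non-decreasing and tends to $+\infty$ (were it bounded, the increments would force $|\operatorname{Re}u_{k}|\to\infty$, which the real recursion contradicts since $\operatorname{Re}u_{k+1}-\operatorname{Re}u_{k}\approx-(\operatorname{Re}u_{k})^{-1}$ for $|\operatorname{Re}u_{k}|$ large), and that $\operatorname{Re}u_{k}=O(\operatorname{Im}u_{k})$ because $|\operatorname{Re}u_{k+1}-\operatorname{Re}u_{k}|\le(2\operatorname{Im}u_{k})^{-1}$. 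Then, for large $k$, the functions $t\mapsto t(t-u_{k})^{-1}$ and $t\mapsto(t-u_{k})^{-1}$ are uniformly dominated by $\nu$-integrable functions and tend to $0$ pointwise, so dominated convergence gives $\int t(t-u_{k})^{-1}d\nu(t)\to0$ and $\int(t-u_{k})^{-1}d\nu(t)\to0$; squaring the recursion and using $\nu(\mathbb{R})=1$,
\[
u_{k+1}^{2}=u_{k}^{2}-2+2\int_{\mathbb{R}}\frac{t\,d\nu(t)}{t-u_{k}}+\Bigl(\int_{\mathbb{R}}\frac{d\nu(t)}{t-u_{k}}\Bigr)^{2}=u_{k}^{2}-2+o(1),
\]
hence $u_{k}^{2}=-2k+o(k)$, and separating real and imaginary parts yields the claim. (For $\mu$ of bounded support this is elementary; e.g. $\mu=\tfrac12(\delta_{-1}+\delta_{1})$ has $F_\mu(z)=z-1/z$ exactly.)

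Consequently, since $|u_{k}|^{-2}=O(k^{-1})$,
\[
\langle\widehat{T_\mu}^{\,k}P_{i},P_{i}\rangle=-\frac1\pi\operatorname{Im}\frac1{u_{k}}+O(k^{-1})=\frac1\pi\int_{\mathbb{R}}\frac{d\mu^{\triangleright k}(t)}{1+t^{2}}+O(k^{-1})=\frac1{\pi\sqrt{2k}}\bigl(1+o(1)\bigr),
\]
the middle equality being $1/u_{k}=1/F_\mu^{\circ k}(i)=G_{\mu^{\triangleright k}}(i)$ (equivalently, $\int(1+t^{2})^{-1}d\mu^{\triangleright k}(t)\sim(2k)^{-1/2}$ is a boundary form of the Monotone CLT at $0$). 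Summing over $k<n$, the $O(k^{-1})$ terms contributing only $O(\log n)$,
\[
\sum_{k=0}^{n-1}\langle\widehat{T_\mu}^{\,k}P_{i},P_{i}\rangle\sim\frac1{\pi\sqrt2}\sum_{k=1}^{n-1}\frac1{\sqrt k}\sim\frac{2\sqrt n}{\pi\sqrt2}=\pi^{-1}\sqrt{2n}.
\]

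Finally, since $T_\mu$ is conservative and ergodic, the Chacón–Ornstein ratio ergodic theorem applied to $\widehat{T_\mu}$, together with the resulting standard representation of the return sequence (see \cite{AaronsonBook}), gives $\sum_{k=0}^{n-1}\langle\widehat{T_\mu}^{\,k}u,v\rangle\sim a_{n}(T_\mu)\int_{\mathbb{R}}u\,d\lambda\int_{\mathbb{R}}v\,d\lambda$ for all nonnegative $u,v\in L_{1}(\lambda)$ with positive integral; taking $u=v=P_{i}$ (so both integrals are $1$) yields $a_{n}(T_\mu)\sim\pi^{-1}\sqrt{2n}$. The delicate point is the second step: promoting the weak Monotone CLT to the sharp rate $\operatorname{Im}F_\mu^{\circ k}(i)\sim\sqrt{2k}$ with no moment assumption beyond unit variance. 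The transfer-operator identity and the reduction of $a_{n}(T_\mu)$ to $\langle\widehat{T_\mu}^{\,k}P_{i},P_{i}\rangle$ are soft.
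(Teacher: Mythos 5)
Your overall route is the same as the paper's: square the recursion $u_{k+1}=u_k+\int(t-u_k)^{-1}\,d\nu(t)$, show $u_k^2/k\to-2$ by telescoping, and feed $\Im(-1/u_k)\sim(2k)^{-1/2}$ into an Aaronson-type dual representation of $a_n(T_\mu)$ (your transfer-operator/Poisson-kernel derivation of that representation is a harmless re-packaging of the formula the paper simply quotes from Aaronson). The problem is the step you yourself flag as ``the delicate point'': you have not actually proved that the orbit escapes to infinity \emph{non-tangentially}, i.e.\ that $\Im u_k\to\infty$ and $\Re u_k=O(\Im u_k)$, and everything downstream depends on it. The dominated convergence you invoke for $\int t(t-u_k)^{-1}d\nu$ and $\int u_k(t-u_k)^{-1}d\nu$ requires a bound of the form $|u_k/(t-u_k)|\le\sqrt{1+(\Re u_k/\Im u_k)^2}\le C$ uniformly in $t$ and $k$; without the non-tangential control there is no integrable dominating function when $\nu$ has unbounded support.

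Neither of your two sketched arguments closes this. For $\Im u_k\to\infty$: if $\Im u_k$ were bounded and $|\Re u_k|\to\infty$, the approximation $\Re u_{k+1}-\Re u_k\approx-(\Re u_k)^{-1}$ is itself unjustified (it is another dominated-convergence claim, and the mass of $\nu$ outside a compact set contributes increments of a fixed size $\varepsilon$, compatible with $|\Re u_k|$ growing linearly), so no contradiction is reached. For $\Re u_k=O(\Im u_k)$: from $|\Re u_{k+1}-\Re u_k|\le(\Im u_k)^{-1}$ you only get $|\Re u_n|\lesssim\sum_{k<n}(\Im u_k)^{-1}$, and without already knowing that $\Im u_k$ grows like $\sqrt k$ this sum can be much larger than $\Im u_n$ (e.g.\ if $\Im u_k$ grew logarithmically the bound is $\approx n/\log n$); the argument is circular. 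This is exactly the content of the paper's Proposition 4.1, which proves that a truncated cone $\{|x|\le y,\ y\ge 2k+2\}$ is invariant under $F_\mu$ by showing $\Re G_\nu>0$ on the right edge and $\Re G_\nu<0$ on the left edge (using only $\nu([-k,k])\ge 0.9$), so that an orbit started at $(2k+2)i$ never leaves the cone and the bound $|u_j/(t-u_j)|\le\sqrt2$ holds for free. You would need this lemma (or an equivalent), and you would also need to start the orbit at a point inside such a cone rather than at $i$ (or add a Harnack-type argument showing the choice of base point is immaterial). With Proposition 4.1 in hand, the rest of your write-up goes through.
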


In particular, the Darling-Kac type theorem from \cite[Corollary 3.7.3]{AaronsonBook}
implies that for any absolutely continuous probability measure $P$
on $\mathbb{R}$ and for any integrable $f:\mathbb{R}\rightarrow[0,+\infty]$
with $\left\Vert f\right\Vert _{1}=1$, one has the weak limit
\[
\lim_{n\rightarrow\infty}P\left(\left[S_{n}(f)\leq\pi^{-1}\sqrt{2n}t\right]\right)=\frac{2}{\pi}\int_{0}^{t}e^{-x^{2}/\pi}\,dx,\quad t\geq0.
\]
We then obtain a good understanding for the growth of the random variable $S_{n}^{A}=S_{n}(I_{A})$
in the distributional sense. (For the pointwise behavior of $S_{n}^{A}$, we refer to the book \cite{AaronsonBook} for various rational ergodic theorems.)

For the organization of this paper, we first collect the preliminary materials in Section 2, then we prove the Berry-Esseen estimate and the asymptotics of $a_{n}(T_{\mu})$ in Sections 3 and 4, respectively. 

\section{Preliminaries}

\subsection{Realization of monotone convolution and $F$-transform}
Given two probability measures $\nu_{1}$ and $\nu_{2}$ on $\mathbb{R}$,
we consider the Hilbert space $H=L_{2}\left(\mathbb{R}^{2},\nu_{1}\otimes\nu_{2}\right)$
and two self-adjoint operators $X_{1}$ and $X_{2}$ on $H$ defined
by 
\[
X_{1}h_{1}(x,y)=x\int_{\mathbb{R}}h_{1}(x,t)\,d\nu_{2}(t),\quad X_{2}h_{2}(x,y)=yh_{2}(x,y),
\]
where the domain of $X_{1}$ consists of all $h_{1}\in H$ satisfying
\[
\int_{\mathbb{R}}x^{2}\left|\int_{\mathbb{R}}h_{1}(x,t)\,d\nu_{2}(t)\right|^{2}\,d\nu_{1}(x)<+\infty,
\]
and that of $X_{2}$ consists of all $h_{2}\in H$ such that
\[
\int_{\mathbb{R}^{2}}\left|yh_{2}(x,y)\right|^{2}\,d\nu_{1}\otimes\nu_{2}(x,y)<+\infty.
\]

It follows that the sum $X_{1}+X_{2}$ is essentially self-adjoint,
and hence the spectral theorem yields a unique probability measure
$\nu_{1}\triangleright\nu_{2}$ on $\mathbb{R}$ such that 
\[
\varphi\left(\psi\left(X_{1}+X_{2}\right)\right)=\int_{\mathbb{R}}\psi\,d\nu_{1}\triangleright\nu_{2}
\]
for all bounded Borel functions $\psi$ on $\mathbb{R}$. That is,
$\nu_{1}\triangleright\nu_{2}$ is the \emph{distribution} of the
(noncummutative) random variable $X_{1}+X_{2}$. Here $\varphi$ denotes
the vector state associated with the constant function one on $\mathbb{R}^{2}$,
and $\psi\left(X_{1}+X_{2}\right)$ is defined through the functional
calculus. 

The measure $\nu_{1}\triangleright\nu_{2}$ is called the \emph{monotone
convolution} of $\nu_{1}$ and $\nu_{2}$. It was shown in \cite{Franz}
that the algebras $\mathcal{A}_{i}=\left\{ f(X_{i}):f\in C_{b}(\mathbb{R}),\:f(0)=0\right\} $,
$i\in\left\{ 1,2\right\} $, are \emph{monotonically independent}
in the $C^{*}$-probability space $\left(B(H),\varphi\right)$ in
the sense that for every mixed moment $\varphi\left(a_{1}a_{2}\cdots a_{n}\right)$
where $a_{j}\in\mathcal{A}_{i_{j}}$, $i_{j}\in\{1,2\}$, and $i_{1}\neq i_{2}\neq\cdots\neq i_{n}$,
one has that 
\[
\varphi\left(a_{1}a_{2}\cdots a_{n}\right)=\varphi(a_{j})\varphi\left(a_{1}\cdots a_{j-1}a_{j+1}\cdots a_{n}\right),
\]
whenever $a_{j}\in\mathcal{A}_{2}$. 

An important feature of the monotone convolution is that 
\[
F_{\nu_{1}\triangleright\nu_{2}}(z)=F_{\nu_{1}}\circ F_{\nu_{2}}(z),\quad z\in\mathbb{C}^{+}.
\]
Thus, the existence of the monotone convolution powers $\mu^{\triangleright n}$
may be proved by applying the above construction inductively to the
measure $\mu$. To reiterate, we now have that $\mu^{\triangleright1}=\mu$,
$\mu^{\triangleright n}=\mu\triangleright\mu^{\triangleright(n-1)}$
for $n\geq2$, and $F_{\mu^{\triangleright n}}=F_{\mu}^{\circ n}$
in $\mathbb{C}^{+}$. As for the dilation $\mu_{n}$, it follows from
the spectral theorem that 
\begin{equation}
F_{\mu_{n}}(z)=\frac{1}{\sqrt{n}}F_{\mu^{\triangleright n}}\left(\sqrt{n}z\right)=\frac{1}{\sqrt{n}}F_{\mu}^{\circ n}\left(\sqrt{n}z\right),\quad z\in\mathbb{C}^{+}.\label{dilation}
\end{equation}

We also refer to Muraki's original papers \cite{Muraki1,Muraki2}
for a more general monotone product construction, by which a sequence
$X_{1},X_{2},\cdots,X_{n}$ of monotonically independent self-adjoint
operators with the same distribution $\mu$ can be constructed at
once on a Hilbert space so that the measure $\mu^{\triangleright n}$
is the distribution of the sum $X_{1}+X_{2}+\cdots+X_{n}$. 

We conclude this subsection by gathering some well-known facts about
the $F$-transform. First, $\mu$ has finite second moment if and
only if the Nevanlinna integral form of $F_{\mu}$ is given by 
\begin{equation}
F_{\mu}(z)=z-m(\mu)+\int_{\mathbb{R}}\frac{1}{t-z}\,d\nu(t),\quad z\in\mathbb{C}^{+},\label{Nevanlinna}
\end{equation}
where $m(\mu)$ denotes the mean of $\mu$ and $\nu$ is a finite
Borel measure on $\mathbb{R}$ with $\nu(\mathbb{R})=var(\mu)$, the
variance of $\mu$. 

We also note that if the measure $\mu$ has finite moment $m_{2n}(\mu)$
of order $2n$ then $m_{2n-2}(\nu)<+\infty$, see \cite{Akhiezer}. 

The integral form (\ref{Nevanlinna}) shows that $\Im F_{\mu}(z)\geq\Im z$
for $z\in\mathbb{C}^{+}$, and the equality holds at some point in
$\mathbb{C}^{+}$ if and only if $\mu=\delta_{a}$ for some $a\in\mathbb{R}$.

\subsection{Bai's inequality and the arcsine measure}

For notational convenience, we use $dx$ or $dt$ to denote the Lebesgue
measure $\lambda$ in the sequel. 

Let $C_{\mu}(x)=\mu\left((-\infty,x]\right)$, $x\in\mathbb{R}$,
be the cumulative distribution function of a probability measure $\mu$.
Recall the \emph{Bai's inequality }from \cite{Bai} as follows: If
two probability measures $\mu$ and $\nu$ satisfy
\[
\int_{\mathbb{R}}\left|C_{\mu}(x)-C_{\nu}(x)\right|\,dx<+\infty,
\]
then one has the supremum norm \[\left\Vert C_{\mu}-C_{\nu}\right\Vert _{\infty} \leq \int_{\mathbb{R}}\left|\frac{1}{F_{\mu}(x+iy)}-\frac{1}{F_{\nu}(x+iy)}\right|\,dx+\frac{1}{y}\sup_{r\in\mathbb{R}}\int_{\left|t\right|\leq2\sqrt{3}y}\left|C_{\nu}(r+t)-C_{\nu}(t)\right|\,dt\]
for all $y>0$. We examine this inequality when $\nu$ is the arcsine
measure $\gamma$. 
\begin{lem}
For any probability measure $\mu$ with finite variance and for any
$0<y<(4\sqrt{3})^{-1}$, one has the Kolmogorov distance 
\[
\left\Vert C_{\mu}-C_{\gamma}\right\Vert _{\infty}\leq\int_{\mathbb{R}}\left|\frac{1}{F_{\mu}(x+iy)}-\frac{1}{F_{\gamma}(x+iy)}\right|\,dx+11\sqrt{y}.
\]
\end{lem}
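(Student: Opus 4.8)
The plan is to apply Bai's inequality with $\nu = \gamma$, the arcsine measure, and estimate the second term on its right-hand side uniformly in $r$. First I would note that $C_\mu - C_\gamma$ is supported (essentially) on a fixed compact set near $[-\sqrt 2,\sqrt 2]$ together with tails controlled by the finite variance of $\mu$, so the hypothesis $\int_{\mathbb R}|C_\mu(x)-C_\gamma(x)|\,dx < +\infty$ of Bai's inequality is satisfied and the inequality is applicable. Thus
\[
\left\Vert C_{\mu}-C_{\gamma}\right\Vert _{\infty}\leq\int_{\mathbb{R}}\left|\frac{1}{F_{\mu}(x+iy)}-\frac{1}{F_{\gamma}(x+iy)}\right|\,dx+\frac{1}{y}\sup_{r\in\mathbb{R}}\int_{\left|t\right|\leq2\sqrt{3}y}\left|C_{\gamma}(r+t)-C_{\gamma}(t)\right|\,dt,
\]
and it remains only to show that the last term is at most $11\sqrt y$ when $0<y<(4\sqrt3)^{-1}$.

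The key step is a modulus-of-continuity estimate for $C_\gamma$ in the $L^1$-average sense appearing above. The arcsine density $d\gamma/d\lambda = (\pi\sqrt{2-t^2})^{-1}\mathbf 1_{(-\sqrt2,\sqrt2)}(t)$ is bounded except near the two endpoints $\pm\sqrt2$, where it has an integrable square-root singularity; in fact $C_\gamma$ is globally $\tfrac12$-Hölder, with $|C_\gamma(a)-C_\gamma(b)| \le K\sqrt{|a-b|}$ for an explicit constant $K$ (one gets $K = \sqrt{2/\pi}\cdot(\text{small constant})$ by integrating the density across an interval of length $|a-b|$ and using that the worst case is an interval abutting an endpoint, where $\int_0^h (2\sqrt2 s - s^2)^{-1/2}\,ds/\pi \le C\sqrt h$). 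Fix $r$ and set $\delta = 2\sqrt3\, y$. Then for each $t$ with $|t|\le\delta$ we have $|C_\gamma(r+t)-C_\gamma(t)| \le K\sqrt{|r|}$ if this is the binding bound, but more usefully we bound it by $\min\{1, K\sqrt{|r|}\}$ and also, crucially, by integrating in $t$ first: by Fubini / translation, $\int_{|t|\le\delta}|C_\gamma(r+t)-C_\gamma(t)|\,dt \le \int_{\mathbb R}|C_\gamma(s)-C_\gamma(s-r)|\mathbf 1_{[-\delta,\delta]}(\cdot)\ldots$ — cleaner is to just use $|C_\gamma(r+t)-C_\gamma(t)|\le K\sqrt{|r|}$ together with the fact that $C_\gamma$ is also Lipschitz-in-average, yielding $\int_{|t|\le\delta}|C_\gamma(r+t)-C_\gamma(t)|\,dt \le \min\{2\delta,\ 2\delta K\sqrt{|r|},\ C'\delta\sqrt\delta + \text{(endpoint correction)}\}$. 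Dividing by $y$ and inserting $\delta = 2\sqrt3\,y$ turns a bound of the shape $C''\, y\sqrt y / y = C''\sqrt y$ into the claimed $11\sqrt y$ after tracking the numerical constants (the factors $2\sqrt3$ and the Hölder constant $K$), with the restriction $y<(4\sqrt3)^{-1}$ ensuring $\delta<\tfrac12$ so that translation by $t$ keeps us in a regime where the endpoint singularities are handled by a single uniform $\sqrt{\phantom{x}}$-bound.

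I expect the main obstacle to be the bookkeeping of the explicit constant: one must split $\sup_r \int_{|t|\le\delta}|C_\gamma(r+t)-C_\gamma(t)|\,dt$ according to whether $|r|$ is small or large and combine the trivial bound $|C_\gamma|\le 1$ with the $\tfrac12$-Hölder bound in just the right way so that, after dividing by $y$, the resulting coefficient of $\sqrt y$ comes out no larger than $11$. Concretely, the worst contribution comes when both the interval $[-\delta,\delta]$ for $t$ and the shift $r$ are comparable to $\delta$ and one of the translated integrals straddles an endpoint $\pm\sqrt2$; there the square-root singularity forces the $\sqrt\delta \sim \sqrt y$ rate and fixes the leading constant. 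Everything else — the applicability of Bai's inequality, the regularity of $\gamma$ away from $\pm\sqrt2$ — is routine.
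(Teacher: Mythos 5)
Your overall strategy is the paper's: apply Bai's inequality with the arcsine law as the reference measure, verify the integrability hypothesis from the finite variance, and reduce everything to an $L^{1}$-in-$t$ modulus-of-continuity bound for $C_{\gamma}$ driven by the square-root singularity of the arcsine density at $\pm\sqrt{2}$. The paper carries this out by a case analysis on the base point, showing $|C_{\gamma}(r+t)-C_{\gamma}(r)|\le 2\bigl[C_{\gamma}(\sqrt{2})-C_{\gamma}(\sqrt{2}-|t|)\bigr]$ uniformly in $r$ and then integrating in $t$ to get $\frac{32\sqrt[4]{2}}{\pi\sqrt[4]{3}}\,y^{3/2}\approx 9.2\,y^{3/2}$; your Hölder-constant route, done correctly, gives $\tfrac{4}{3}\cdot\tfrac{2}{\pi 2^{1/4}}\cdot(2\sqrt{3})^{3/2}\,y^{3/2}\approx 4.6\,y^{3/2}$, comfortably below $11\,y^{3/2}$. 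So the idea is sound and the constant is not actually tight.

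There is, however, a genuine confusion in your write-up about which variable plays the role of the increment, and as written the plan does not close. You repeatedly bound $|C_{\gamma}(r+t)-C_{\gamma}(t)|$ by quantities like $K\sqrt{|r|}$ or $\min\{1,K\sqrt{|r|}\}$ and propose to ``split according to whether $|r|$ is small or large.'' But $r$ is the variable over which a supremum is taken; for $|r|$ large the trivial bound gives $\frac{1}{y}\int_{|t|\le 2\sqrt{3}y}1\,dt=4\sqrt{3}$, a constant, not $O(\sqrt{y})$, so that branch of your split can never be repaired. Indeed, the expression $\sup_{r}\int_{|t|\le 2\sqrt{3}y}|C_{\gamma}(r+t)-C_{\gamma}(t)|\,dt$ taken literally (e.g.\ $r=10$) is of order $y$, and no $O(y^{3/2})$ bound is possible; the correct form of Bai's inequality has $|C_{\gamma}(r+t)-C_{\gamma}(r)|$, with the \emph{integration} variable $t$ as the increment and $r$ merely the base point. (The paper's displayed statement contains the same typo, but its proof uses the correct form.) Once this is fixed, no case split on $r$ is needed at all: the single uniform bound $|C_{\gamma}(r+t)-C_{\gamma}(r)|\le \frac{2}{\pi 2^{1/4}}\sqrt{|t|}$, valid for $|t|\le 2\sqrt{3}y<\tfrac12$ because the worst interval of length $|t|$ abuts an endpoint, integrates to the required $O(y^{3/2})$. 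You should restate the inequality in its correct form and delete the $\sqrt{|r|}$ bounds and the small/large-$r$ dichotomy, which are dead ends.
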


\begin{proof}
We first show that $\mu$ and $\gamma$ satisfy the integrability
requirement in Bai's inequality. Indeed, the Chebyshev's inequality
yields
\begin{eqnarray*}
\int_{\mathbb{R}}\left|C_{\mu}(x)-C_{\gamma}(x)\right|\,dx & \leq & \int_{\left|x\right|\geq\sqrt{2}}\mu\left(\left\{ t:\left|t\right|\geq x\right\} \right)\,dx+\int_{-\sqrt{2}}^{\sqrt{2}}\left|C_{\mu}(x)-C_{\gamma}(x)\right|\,dx\\
 & \leq & var(\mu)\int_{\left|x\right|\geq\sqrt{2}}x^{-2}\,dx+4\sqrt{2}<+\infty.
\end{eqnarray*}

Next, we show 
\begin{equation}
\frac{1}{y}\sup_{r\in\mathbb{R}}\int_{\left|t\right|\leq2\sqrt{3}y}\left|C_{\gamma}(r+t)-C_{\gamma}(t)\right|\,dt\leq11\sqrt{y}\label{Bai}
\end{equation}
to conclude the proof of this lemma. 

We first examine the case $0\leq t\leq2\sqrt{3}y$, in which our assumption
on $y$ implies $t<\sqrt{2}-t$. Note that the difference $\Delta C(r)=C_{\gamma}(r+t)-C_{\gamma}(t)$
satisfies $\Delta C(r)=0$ for $r\in(-\infty,-\sqrt{2}-t]\cup[\sqrt{2},+\infty)$,
because the arcsine density $f(x)=\pi^{-1}(2-x^{2})^{-1/2}$ is supported
on $(-\sqrt{2},\sqrt{2})$. Also, from the graph of $f$, we observe
that 
\[
\Delta C(r)\leq\Delta C(\sqrt{2}-t)=\Delta C(-\sqrt{2})=\int_{\sqrt{2}-t}^{\sqrt{2}}f(x)\,dx
\]
 for $r\in(-\sqrt{2}-t,-\sqrt{2}]\cup[\sqrt{2}-t,\sqrt{2})$. 

If $r\in[0,\sqrt{2}-t)$, we have the derivative $d\Delta C(r)/dr>0$,
and therefore the monotonicity of $\Delta C$ implies that $\Delta C(r)\leq\Delta C(\sqrt{2}-t)$
for such $r$. For $r\in(-\sqrt{2},-t]$, the symmetry of $\Delta C$
shows that $\Delta C(r)=\Delta C(-r-t)\leq\Delta C(\sqrt{2}-t)$ as
well. 

It remains to estimate $\Delta C(r)$ for $r\in(-t,0)$. In this case
we note that 
\begin{eqnarray*}
\Delta C(r) & = & \int_{r}^{0}f(x)\,dx+\int_{0}^{r+t}f(x)\,dx\\
 & \leq & \int_{-t}^{0}f(x)\,dx+\int_{0}^{t}f(x)\,dx\\
 & = & \Delta C(-t)+\Delta C(0)\leq2\Delta C(\sqrt{2}-t)
\end{eqnarray*}
by our previous discussions. 

We have shown that $\Delta C(r)\leq2\Delta C(\sqrt{2}-t)$ when $0\leq t\leq2\sqrt{3}y$.
By the same method, one can also prove that $\left|\Delta C(r)\right|\leq2\Delta C(\sqrt{2}+t)$
for $0>t\geq-2\sqrt{3}y$. In summary, we have $\left|\Delta C(r)\right|\leq2\left[C(\sqrt{2})-C(\sqrt{2}-\left|t\right|)\right]$
for any $r\in\mathbb{R}$ and $\left|t\right|\leq2\sqrt{3}y$. 

For $r\in\mathbb{R}$, we now compute 
\begin{eqnarray*}
\int_{\left|t\right|\leq2\sqrt{3}y}\left|\Delta C(r)\right|\,dt & \leq & 2\int_{\left|t\right|\leq2\sqrt{3}y}C(\sqrt{2})-C(\sqrt{2}-\left|t\right|)\,dt\\
 & = & 4\int_{0}^{2\sqrt{3}y}C(\sqrt{2})-C(\sqrt{2}-t)\,dt\\
 & = & 4\int_{0}^{2\sqrt{3}y}\int_{\sqrt{2}-t}^{\sqrt{2}}\frac{1}{\pi\sqrt{2-x^{2}}}\,dx\,dt\\
 & \leq & \frac{4}{\pi\sqrt[4]{2}}\int_{0}^{2\sqrt{3}y}\int_{\sqrt{2}-t}^{\sqrt{2}}\frac{1}{\sqrt{\sqrt{2}-x}}\,dx\,dt\\
 & = & \frac{4}{\pi\sqrt[4]{2}}\int_{0}^{2\sqrt{3}y}\int_{0}^{t}\frac{1}{\sqrt{x}}\,dx\,dt=\frac{32\sqrt[4]{2}}{\pi\sqrt[4]{3}}y\sqrt{y},
\end{eqnarray*}
from which the estimate (\ref{Bai}) follows. 
\end{proof}
The $F$-transform of $\gamma$ is known to be 
\[
F_{\gamma}(z)=\sqrt{z^{2}-2},\quad z\in\mathbb{C}^{+},
\]
where the branch of the square root is chosen to be analytic on $\mathbb{C}\setminus[0,+\infty)$
and $\sqrt{-1}=i$. Our next lemma deals with small, uniform perturbations
of $F_{\gamma}$ in Bai's inequality.
\begin{lem}
If $y>0$ is given and $\varepsilon$ is a continuous function
on the horizontal line $L=\left\{ x+iy:x\in\mathbb{R}\right\} $ such
that $\left|\varepsilon(z)\right|<3y/2$ for all $z=x+iy\in L$, then 
\[
\int_{\mathbb{R}}\left|\frac{1}{\sqrt{z^{2}-2+\varepsilon(z)}}-\frac{1}{\sqrt{z^{2}-2}}\right|\,dx\leq39\sqrt{\Im z},\quad z\in L.
\]
\end{lem}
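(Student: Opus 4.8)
My plan is to bound the integrand pointwise by a constant multiple of $y\,|z^{2}-2|^{-3/2}$, where $y=\Im z$, and then to prove $\int_{\mathbb{R}}|z^{2}-2|^{-3/2}\,dx=O(y^{-1/2})$; the product is $O(y^{1/2})=O(\sqrt{\Im z})$, and a routine accounting of constants comfortably respects the factor $39$. Set $w=z^{2}-2$ and $w'=z^{2}-2+\varepsilon(z)$ for $z=x+iy\in L$. The first thing I would record is that $\mathrm{dist}\bigl(w,[0,+\infty)\bigr)\ge 2\sqrt{2}\,y$: writing $w=(x^{2}-y^{2}-2)+2ixy$, when $x^{2}-y^{2}-2\ge0$ the nearest point of $[0,+\infty)$ is $x^{2}-y^{2}-2$ and the distance is $2|x|y\ge 2\sqrt{y^{2}+2}\,y\ge 2\sqrt{2}\,y$, while when $x^{2}-y^{2}-2<0$ the nearest point is $0$ and $|w|^{2}=(x^{2}-y^{2}-2)^{2}+4x^{2}y^{2}$, a function of $x^{2}\ge0$ with minimum $8y^{2}$ (at $x^{2}=2-y^{2}$, if $y^{2}\le2$) or $(y^{2}+2)^{2}\ge 8y^{2}$ (at $x^{2}=0$, if $y^{2}>2$). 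Since $|\varepsilon(z)|<3y/2<2\sqrt{2}\,y$, the closed disc $\overline{D}\bigl(w,|\varepsilon(z)|\bigr)$ is contained in $\mathbb{C}\setminus[0,+\infty)$; hence $w'$ and the segment joining $w$ to $w'$ lie in the domain of analyticity of the square root, the integrand is continuous and integrable (it is $O(|x|^{-3})$ at infinity by the next paragraph), and $\sqrt{w'}\in\mathbb{C}^{+}$. Also $|\varepsilon(z)|/|w|<\tfrac{3y/2}{2\sqrt{2}\,y}=\tfrac{3}{4\sqrt{2}}=:1-\kappa_{0}$ with $\kappa_{0}>0$.

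For the pointwise bound, from $\tfrac{1}{b}-\tfrac{1}{a}=\tfrac{a-b}{ab}$ and $(\sqrt{a})^{2}-(\sqrt{b})^{2}=a-b$,
\[
\frac{1}{\sqrt{w'}}-\frac{1}{\sqrt{w}}=\frac{\sqrt{w}-\sqrt{w'}}{\sqrt{w}\,\sqrt{w'}}=\frac{-\varepsilon(z)}{\sqrt{w}\,\sqrt{w'}\,\bigl(\sqrt{w}+\sqrt{w'}\bigr)},
\]
and I would estimate the three denominator factors below with $R:=|w|=|z^{2}-2|$. Trivially $|\sqrt{w}|=R^{1/2}$ and $|\sqrt{w'}|=|w'|^{1/2}\ge(R-|\varepsilon(z)|)^{1/2}>\sqrt{\kappa_{0}}\,R^{1/2}$. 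For the last factor, analyticity of $\zeta\mapsto\sqrt{\zeta}$ on a neighbourhood of $[\,w,w'\,]$ gives $\sqrt{w'}-\sqrt{w}=\tfrac{1}{2}\varepsilon(z)\int_{0}^{1}\bigl(w+t\varepsilon(z)\bigr)^{-1/2}\,dt$, and since $|w+t\varepsilon(z)|\ge R-|\varepsilon(z)|>\kappa_{0}R$ we get $\bigl|\sqrt{w'}-\sqrt{w}\bigr|<\tfrac{|\varepsilon(z)|}{2\sqrt{\kappa_{0}}\,R^{1/2}}$, so that
\[
\bigl|\sqrt{w}+\sqrt{w'}\bigr|\ge 2R^{1/2}-\bigl|\sqrt{w'}-\sqrt{w}\bigr|>\Bigl(2-\tfrac{3}{8\sqrt{2}\,\sqrt{\kappa_{0}}}\Bigr)R^{1/2}=:c_{1}R^{1/2},
\]
with $c_{1}>1$. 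Multiplying the three estimates yields $\bigl|\tfrac{1}{\sqrt{w'}}-\tfrac{1}{\sqrt{w}}\bigr|<\dfrac{|\varepsilon(z)|}{c_{1}\sqrt{\kappa_{0}}\,R^{3/2}}<\dfrac{3y}{2c_{1}\sqrt{\kappa_{0}}}\cdot\dfrac{1}{|z^{2}-2|^{3/2}}$.

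It then remains to show $\int_{\mathbb{R}}|z^{2}-2|^{-3/2}\,dx\le C\,y^{-1/2}$ for an absolute $C$. Using $|z^{2}-2|=|z-\sqrt{2}|\,|z+\sqrt{2}|$ I would split $\mathbb{R}$ into the two ``corners'' $A_{\pm}=\{\,|x\mp\sqrt{2}|\le1\,\}$ and the remainder. On $A_{\pm}$ the complementary factor satisfies $|z\pm\sqrt{2}|\ge|x\pm\sqrt{2}|\ge 2\sqrt{2}-1$, hence
\[
\int_{A_{\pm}}\frac{dx}{|z^{2}-2|^{3/2}}\le(2\sqrt{2}-1)^{-3/2}\int_{\mathbb{R}}\frac{dx}{\bigl((x\mp\sqrt{2})^{2}+y^{2}\bigr)^{3/4}}=(2\sqrt{2}-1)^{-3/2}\,y^{-1/2}\int_{\mathbb{R}}\frac{dv}{(v^{2}+1)^{3/4}},
\]
and the last integral converges. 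Off the corners $|z^{2}-2|\ge|x^{2}-2|\ge1$ and $\int_{\mathbb{R}\setminus(A_{+}\cup A_{-})}|x^{2}-2|^{-3/2}\,dx$ is a finite absolute constant (the integrand being $O(|x|^{-3})$ at infinity), which for $y\le1$ is $\le y^{-1/2}$ times that constant since $y^{-1/2}\ge1$; for $y\ge1$ one has $|z^{2}-2|\ge2\sqrt{2}\,y$ everywhere and, for $y\ge2$, $|z^{2}-2|\ge|z|^{2}-2\ge\tfrac12\max(x^{2},y^{2})$, giving the even faster $\int_{\mathbb{R}}|z^{2}-2|^{-3/2}\,dx=O(y^{-2})=O(y^{-1/2})$, while $1\le y\le2$ is handled by boundedness. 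Combining this with the previous paragraph and chasing the constants completes the proof.

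The step I expect to be the genuine obstacle is the lower bound $\bigl|\sqrt{w}+\sqrt{w'}\bigr|\gtrsim|z^{2}-2|^{1/2}$. A priori $\sqrt{w}$ and $\sqrt{w'}$ could nearly cancel, in which case one would only have $\bigl|\sqrt{w}+\sqrt{w'}\bigr|\ge\Im\sqrt{w}=\Im F_{\gamma}(z)$; but $\int_{\mathbb{R}}\bigl(|z^{2}-2|\,\Im F_{\gamma}(z)\bigr)^{-1}\,dx$ is only $O(y^{-1})$, which would give merely a bounded estimate rather than $O(\sqrt{\Im z})$ --- the loss being felt for $x$ just beyond $\pm\sqrt{2}$, where $\Im F_{\gamma}(z)$ is far smaller than $|z^{2}-2|^{1/2}$. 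The hypothesis $|\varepsilon|<3y/2$ is present precisely to force $w'$ to lie well inside the disc $D\bigl(w,\mathrm{dist}(w,[0,+\infty))\bigr)$, so that $\sqrt{w'}$ cannot nearly cancel $\sqrt{w}$; the integral representation of $\sqrt{w'}-\sqrt{w}$ used above is the convenient way to convert that into the required inequality.
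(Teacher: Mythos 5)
Your proof is correct and follows essentially the same route as the paper: the fundamental theorem of calculus along the segment from $z^{2}-2$ to $z^{2}-2+\varepsilon(z)$, the lower bound $|z^{2}-2+t\varepsilon(z)|\gtrsim|z^{2}-2|$ (valid because $|z^{2}-2|\geq2\sqrt{2}\,y$ on $L$), a pointwise bound of the form $Cy\,|z^{2}-2|^{-3/2}$, and then the same splitting of the $x$-integral near and away from $\pm\sqrt{2}$. Your extra factor $\sqrt{w}+\sqrt{w'}$ is an unnecessary detour (the paper bounds $|\sqrt{w'}-\sqrt{w}|$ and divides by $|\sqrt{w}\sqrt{w'}|$ directly), but your treatment of it is sound, and your explicit handling of the regime $y\geq1$ is actually a point where you are more careful than the paper's own final line.
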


\begin{proof}
The function $z\mapsto z^{2}-2$ maps
the horizontal line $L$ bijectively to the parabola $\left\{ (u,v):u=v^{2}/4y^{2}-2-y^{2}\right\} $. For all $z\in L$, let $\ell(z)$ be the line segment connecting the points $z^{2}-2$
and $z^{2}-2+\varepsilon(z)$. Our hypothesis on
$\varepsilon(z)$ implies that $\ell(z)$ is contained in $\mathbb{C}\setminus[0,+\infty)$,
and hence the fundamental theorem of calculus for complex line integral
yields
\[
\sqrt{z^{2}-2+\varepsilon(z)}-\sqrt{z^{2}-2}=\int_{w\in\ell(z)}\frac{1}{2\sqrt{w}}\,dw.
\]

On account of the parametrization $w(t)=$$z^{2}-2+t\varepsilon(z)$,
$t\in[0,1]$, and the fact
\[
\left|z^{2}-2+t\varepsilon(z)\right|\geq\left|z^{2}-2\right|-\frac{3y}{2}>\left|z^{2}-2\right|-\frac{3\left|z^{2}-2\right|}{4}=\frac{\left|z^{2}-2\right|}{4},
\]
we get the estimate
\begin{eqnarray*}
\left|\int_{w\in\ell(z)}\frac{1}{2\sqrt{w}}\,dw\right| & \leq & \frac{\text{length}(\ell(z))}{2}\cdot\sup_{w\in\ell(z)}\frac{1}{\sqrt{\left|w\right|}}\\
 & < & \frac{\left|\varepsilon(z)\right|}{\left|z^{2}-2\right|^{1/2}}<\frac{3y}{2\left|z^{2}-2\right|^{1/2}},
\end{eqnarray*}
which leads further to the inequality
\[
\left|\frac{1}{\sqrt{z^{2}-2+\varepsilon(z)}}-\frac{1}{\sqrt{z^{2}-2}}\right|\leq\frac{3y}{\left|z^{2}-2\right|^{3/2}}.
\]

Therefore, we conclude that 
\begin{eqnarray*}
\int_{\mathbb{R}}\left|\frac{1}{\sqrt{z^{2}-2+\varepsilon(z)}}-\frac{1}{\sqrt{z^{2}-2}}\right|\,dx & \leq & \int_{\mathbb{R}}\frac{3y}{\left|z^{2}-2\right|^{3/2}}\,dx\\
 & = & \int_{0}^{\infty}\frac{6y}{\left|z^{2}-2\right|^{3/2}}\,dx\\
 & = & \int_{0}^{2}\frac{6y}{\left|z+\sqrt{2}\right|^{3/2}\left|z-\sqrt{2}\right|^{3/2}}\,dx\\
 &  & +\int_{2}^{\infty}\frac{6y}{\left|z^{2}-2\right|^{3/2}}\,dx\\
 & \leq & \int_{0}^{2}\frac{6y}{\left|z-\sqrt{2}\right|^{3/2}}\,dx+\int_{2}^{\infty}\frac{6y}{(x-1)^{3}}\,dx\\
 & = & \int_{-\sqrt{2}}^{2-\sqrt{2}}\frac{6y}{(x^{2}+y^{2})^{3/4}}\,dx+3y\\
 & \leq & 6\sqrt{y}\int_{\mathbb{R}}\frac{1}{(u^{2}+1)^{3/4}}\,du+3\sqrt{y}<39\sqrt{y}.
\end{eqnarray*}
\end{proof}

\section{Berry-Esseen Estimates}

Recall that $\mu_{n}$ is the normalization of $\mu^{\triangleright n}$
in Monotone CLT. From the characterization of the domain of attraction
of $\gamma$ in \cite{Wang}, each monotone convolution power $\mu^{\triangleright n}$
has variance $n$ and zero mean, implying further that $var(\mu_{n})=1$
and $m(\mu_{n})=0$. In particular, $\mu_{n}$ satisfies the integrability
hypothesis of Bai's inequality from the proof of Lemma 2.1. 

By (\ref{dilation}) and (\ref{Nevanlinna}), we have 
\[
F_{n}(z)=\frac{F_{\mu}(\sqrt{n}z)}{\sqrt{n}}=z+\frac{1}{\sqrt{n}}\int_{\mathbb{R}}\frac{1}{t-\sqrt{n}z}\,d\nu(t),\quad z\in\mathbb{C}^{+},
\]
where $\nu(\mathbb{R})=var(\mu)=1$, and $F_{\mu_{n}}=F_{n}^{\circ n}$
in $\mathbb{C}^{+}$. Note that $\Im F_{n}(z)>\Im z$ for $z\in\mathbb{C}^{+}$.

We introduce two conjugacy functions $\psi_{1}(z)=z^{2}$ and $\psi_{2}(z)=\sqrt{z}$,
as well as the auxiliary function 
\[
r_{n}(z)=\frac{2}{n}\int_{\mathbb{R}}\frac{t}{t-\sqrt{n}z}\,d\nu(t)+\frac{1}{n}\left[\int_{\mathbb{R}}\frac{1}{t-\sqrt{n}z}\,d\nu(t)\right]^{2},\quad z\in\mathbb{C}^{+},
\]
so that 
\begin{eqnarray}
F_{\mu_{n}}(\sqrt{z})^{2}=\psi_{1}\circ F_{\mu_{n}}\circ\psi_{2}(z) & = & (\psi_{1}\circ F_{n}\circ\psi_{2})^{\circ n}(z)\label{iteration}\\
 & = & \left[z-\frac{2}{n}+r_{n}(\sqrt{z})\right]^{\circ n}\nonumber \\
 & = & z-2+\sum_{j=0}^{n-1}r_{n}\left(F_{n}^{\circ j}(\sqrt{z})\right)\nonumber 
\end{eqnarray}
for any $z\in\mathbb{C}\setminus\mathbb{R}$. Note that the maps $r_{n}$
have the following uniform bound in $\mathbb{C}^{+}$: 
\begin{equation}
\left|r_{n}(z)\right|\leq\frac{2c}{n\sqrt{n}\Im z}+\frac{1}{n^{2}(\Im z)^{2}},\quad z\in\mathbb{C}^{+},\quad n\geq1,\label{cbound}
\end{equation}
if we assume 
\[
c=\int_{\mathbb{R}}\left|t\right|\,d\nu(t)\in[0,+\infty).
\]

Below is the main result of this section, and Theorem 1.1 is the part
(1) in it.
\begin{thm}
Let $\mu$ be a probability measure on $\mathbb{R}$ with $m(\mu)=0$
and $var(\mu)=1$, and let $\mu_{n}$ be the normalization of $\mu^{\triangleright n}$
by the factor of $n^{-1/2}$. Assume the first absolute moment $c$
of the measure $\nu$ is finite. 
\begin{enumerate}
\item If $c>0$ then
\[
\left\Vert C_{\mu_{n}}-C_{\gamma}\right\Vert _{\infty}\leq\frac{71\sqrt[4]{c}}{n^{1/8}},\quad n>\max\left\{ (8\sqrt{3c})^{4},(8c\sqrt{c})^{-4}\right\} .
\]
In particular, this convergence rate holds when the fourth moment
$m_{4}(\mu)<+\infty$.
\item If $c>0$ and if the $F$-transform of $\nu$ has the Nevanlinna form
\[
F_{\nu}(z)=z-m(\nu)+\int_{\mathbb{R}}\frac{1}{t-z}\,d\rho(t)
\]
where  
\[
d=\int_{\mathbb{R}}\left|t\right|\,d\rho(t)\in[0,+\infty),
\]
then the estimate 
\[
\left\Vert C_{\mu_{n}}-C_{\gamma}\right\Vert _{\infty}\leq\frac{200\sqrt{d+3(1+m_{2}(\nu))^{2}}}{n^{1/4}}
\]
holds for $n>\max\left\{ 4m(\nu)^{2},4m_{2}(\nu)^{2},12288\left[d+3(1+m_{2}(\nu))^{2}\right]^{2}\right\} $.
In particular, this convergence rate holds if $m_{6}(\mu)<+\infty$.
\item The case $c=0$ corresponds to $\mu=(\delta_{-1}+\delta_{1})/2$,
and we have 
\[
\left\Vert C_{\mu_{n}}-C_{\gamma}\right\Vert _{\infty}\leq\frac{200}{n^{1/4}},\quad n>12288.
\]
\end{enumerate}
\end{thm}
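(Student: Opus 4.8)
The plan is to run everything through Bai's inequality (Lemma 2.1) at a height $y=y_{n}$ to be optimized, thereby reducing the Kolmogorov distance to the single integral $\int_{\mathbb{R}}\left|1/F_{\mu_{n}}(x+iy)-1/F_{\gamma}(x+iy)\right|dx$, and then to control that integral by exhibiting $F_{\mu_{n}}$ as a small perturbation of $F_{\gamma}(z)=\sqrt{z^{2}-2}$ so that Lemma 2.2 applies. Concretely, for $w=x+iy$ and $0\leq j\leq n$ set $\varepsilon_{n}^{(j)}(w)=\sum_{i=0}^{j-1}r_{n}(F_{n}^{\circ i}(w))$; iterating the identity $\psi_{1}\circ F_{n}\circ\psi_{2}(z)=z-2/n+r_{n}(\sqrt{z})$ exactly $j$ times (using $\psi_{2}\circ\psi_{1}=\mathrm{id}$ on $\mathbb{C}^{+}$) yields $(F_{n}^{\circ j}(w))^{2}=w^{2}-2j/n+\varepsilon_{n}^{(j)}(w)$, and for $j=n$ this is $F_{\mu_{n}}(w)^{2}=w^{2}-2+\varepsilon_{n}(w)$ with $\varepsilon_{n}:=\varepsilon_{n}^{(n)}$. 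One checks $\mathrm{dist}(w^{2}-2,[0,+\infty))\geq2\sqrt{2}\,y$ for every $w$ with $\Im w\geq y$, so once $|\varepsilon_{n}|<3y/2$ on that half-plane the value $w^{2}-2+\varepsilon_{n}(w)$ avoids $[0,+\infty)$, the chosen branch of the square root is defined there, and it equals $F_{\mu_{n}}$ (the two agree at $w=it$, $t\to\infty$, where both behave like $it$). Lemma 2.2 applied with $\varepsilon=\varepsilon_{n}$ then gives $\int_{\mathbb{R}}|1/F_{\mu_{n}}-1/F_{\gamma}|\,dx\leq39\sqrt{y}$, and together with Lemma 2.1 this produces the master bound
\[
\left\Vert C_{\mu_{n}}-C_{\gamma}\right\Vert _{\infty}\leq50\sqrt{y},
\]
valid whenever $0<y<(4\sqrt{3})^{-1}$ and $\sup_{\Im w\geq y}|\varepsilon_{n}(w)|<3y/2$. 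The whole problem thus reduces to choosing $y=y_{n}$ as small as possible subject to this smallness of $\varepsilon_{n}$, i.e.\ to bounding $\sum_{j=0}^{n-1}|r_{n}(F_{n}^{\circ j}(w))|$.

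For part (1) I would use only the crude bound $\Im F_{n}^{\circ j}(w)\geq\Im w=y$, so that (\ref{cbound}) gives $|\varepsilon_{n}(w)|\leq 2c/(\sqrt{n}\,y)+1/(ny^{2})$. Taking $y_{n}=2\sqrt{c}\,n^{-1/4}$, one verifies that $2c/(\sqrt{n}\,y_{n})+1/(ny_{n}^{2})<3y_{n}/2$ and $y_{n}<(4\sqrt{3})^{-1}$ exactly when $n>\max\{(8\sqrt{3c})^{4},(8c\sqrt{c})^{-4}\}$, and then $50\sqrt{y_{n}}=50\sqrt{2}\,\sqrt[4]{c}\,n^{-1/8}<71\sqrt[4]{c}\,n^{-1/8}$; since $c<+\infty$ whenever $m_{4}(\mu)<+\infty$, this is Theorem 1.1.

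Parts (2) and (3) require a sharper estimate of $\varepsilon_{n}$, since the crude bound only yields $O(n^{-1/8})$. The key is that the identity $(F_{n}^{\circ j}(w))^{2}=w^{2}-2j/n+\varepsilon_{n}^{(j)}(w)$ forces $|F_{n}^{\circ j}(w)|^{2}$ to be bounded below by essentially $|w^{2}-2j/n|$, a quantity of order $1$ for all but $O(|\varepsilon_{n}|\,n)$ of the indices $j$. I would run a bootstrap: assuming $\sup_{i<j}|\varepsilon_{n}^{(i)}(w)|\leq\Lambda_{n}$, deduce $|F_{n}^{\circ i}(w)|^{2}\geq\frac{1}{2}|w^{2}-2i/n|$ for the indices with $|w^{2}-2i/n|\geq2\Lambda_{n}$ and fall back on $|F_{n}^{\circ i}(w)|^{2}\geq y^{2}$ for the remaining $\lesssim\Lambda_{n}n$ indices, then bound $|\varepsilon_{n}^{(j)}(w)|\leq\sum_{i<j}|r_{n}(F_{n}^{\circ i}(w))|$ by a Riemann sum controlled by $\int_{0}^{2}du/|w^{2}-u|^{1/2}$ and $\int_{0}^{2}du/|w^{2}-u|$ plus an exceptional term $\lesssim\Lambda_{n}/(ny^{2})$, and close the induction. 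In case (3), $\nu=\delta_{0}$, $\mu=(\delta_{-1}+\delta_{1})/2$, $r_{n}(\zeta)=1/(n^{2}\zeta^{2})$, so $|r_{n}(F_{n}^{\circ i}(w))|=1/(n^{2}|w^{2}-2i/n+\varepsilon_{n}^{(i)}(w)|)$ and $\int_{0}^{2}du/|w^{2}-u|\lesssim\ln(1/y)$ uniformly in $x$, whence $\Lambda_{n}$ of order $(\ln n)/n$ suffices. In case (2), one first expands $r_{n}$ via the Nevanlinna form (\ref{Nevanlinna}) $F_{\nu}(z)=z-m(\nu)+\int(t-z)^{-1}d\rho(t)$: writing $\int t(t-\sqrt{n}\zeta)^{-1}d\nu(t)=(-m(\nu)+\int(t-\sqrt{n}\zeta)^{-1}d\rho(t))/F_{\nu}(\sqrt{n}\zeta)$, using $d=\int|t|\,d\rho(t)<+\infty$ and $|F_{\nu}(\sqrt{n}\zeta)|\geq\frac{1}{2}\sqrt{n}|\zeta|$ (valid once $n>4m(\nu)^{2}$ and $n>4m_{2}(\nu)^{2}$, for $|\zeta|\geq y$), one gets $|r_{n}(\zeta)|\lesssim|m(\nu)|\,n^{-3/2}|\zeta|^{-1}+(1+m_{2}(\nu)+d)\,n^{-2}|\zeta|^{-2}$; feeding this into the bootstrap gives $\Lambda_{n}\leq24M\,n^{-1/2}$ with $M:=d+3(1+m_{2}(\nu))^{2}$. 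Choosing $y=16Mn^{-1/2}$ in case (2) and $y=16n^{-1/2}$ in case (3) makes both the smallness of $\varepsilon_{n}$ and $y<(4\sqrt{3})^{-1}$ hold precisely at the stated thresholds $n>12288M^{2}$, respectively $n>12288$, and the master bound becomes $50\sqrt{y}=200\sqrt{M}\,n^{-1/4}$, respectively $200\,n^{-1/4}$. Finally, $m_{6}(\mu)<+\infty$ forces $m_{4}(\nu)<+\infty$, hence $m_{2}(\rho)<+\infty$ and $d\leq(\rho(\mathbb{R})\,m_{2}(\rho))^{1/2}<+\infty$, so (2)'s rate holds under a finite sixth moment.

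I expect the bootstrap behind (2) and (3) to be the main obstacle: one must establish $|F_{n}^{\circ j}(w)|^{2}\gtrsim|w^{2}-2j/n|$ uniformly in $x\in\mathbb{R}$, which is delicate exactly for the indices $j$ with $2j/n$ near $\Re(w^{2})=x^{2}-y^{2}$ and $x$ small, where $w^{2}-2j/n$ nearly vanishes and one can only use $|F_{n}^{\circ j}(w)|^{2}\geq y^{2}$. One has to check that such indices number only $O(|\varepsilon_{n}|\,n)$, so that their total contribution to $\sum_{j}|r_{n}(F_{n}^{\circ j}(w))|$ — and to the perturbations $\varepsilon_{n}^{(j)}$ that must also be controlled along every vertical ray above $L$ in order to pin down the branch of the square root — stays below $3y/2$; handling the large-$|x|$ regime (where $|w|$ is itself large and helps) and the near-singular integral $\int_{0}^{2}du/|w^{2}-u|$ uniformly in $x$ is where the real work lies. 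Once $\Lambda_{n}$ is in hand, tracking the constants through the $39\sqrt{y}$ of Lemma 2.2 and the $11\sqrt{y}$ of Bai's inequality to land exactly on $71$ and $200$ at the claimed $n$-thresholds is routine bookkeeping.
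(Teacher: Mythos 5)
Your part (1) is correct and is exactly the paper's argument: the crude bound $\left|\varepsilon_{n}(z)\right|\leq2cn^{-1/2}y^{-1}+n^{-1}y^{-2}$ on the line $y=2\sqrt{c}\,n^{-1/4}$, then Lemmas 2.1 and 2.2. Your setup for (2)--(3) (reduce everything to showing $\left|\varepsilon_{n}\right|\lesssim Mn^{-1/2}$ on a line at height $y\asymp Mn^{-1/2}$) is also the right reduction. The problem is that the content of parts (2) and (3) \emph{is} that improved bound on $\varepsilon_{n}$, and you do not prove it: you outline a bootstrap based on $\left|F_{n}^{\circ j}(w)\right|^{2}\gtrsim\left|w^{2}-2j/n\right|$ off an exceptional set, and you yourself flag the closure of that bootstrap, the uniformity in $x$ of the near-singular integrals, and the control of $\varepsilon_{n}^{(j)}$ along vertical rays as ``where the real work lies.'' As it stands this is a plan with an acknowledged hole at the decisive step, not a proof. (The plan is not obviously doomed --- the exceptional-index counting and the integrals $\int_{0}^{2}\left|w^{2}-u\right|^{-1/2}du$, $\int_{0}^{2}\left|w^{2}-u\right|^{-1}du$ do come out small enough after a careful case analysis in $x$ --- but none of that delicate work is done, and whether the constants close at $\Lambda_{n}=24Mn^{-1/2}$ is exactly what needs checking.)

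The paper avoids your pointwise lower bounds entirely, and it is worth seeing how. Telescoping $F_{n}(z)=z-n^{-1/2}G_{\nu}(\sqrt{n}z)$ gives $F_{n}^{\circ n}(z)-z=-n^{-1/2}\sum_{j=0}^{n-1}G_{\nu}\bigl(\sqrt{n}F_{n}^{\circ j}(z)\bigr)$, and a first-exit-time argument from the ball $\{\left|z\right|<3\}$, using only the crude displacement bounds $\left|F_{n}(z)-z\right|\leq1$ (for $\Im z\geq n^{-1}$) and $\left|F_{n}(z)-z\right|\leq n^{-1}$ (for $\left|z\right|\geq2$, $\Im z\geq n^{-1/2}$), shows $\left|F_{n}^{\circ n}(z)-z\right|<8$ on $L_{n}$, hence $\bigl|\sum_{j}G_{\nu}(\sqrt{n}F_{n}^{\circ j}(z))\bigr|<8\sqrt{n}$. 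The key observation is then a positivity trick: every term has $\Im G_{\nu}(\sqrt{n}F_{n}^{\circ j}(z))\geq\left|F_{\nu}(\sqrt{n}F_{n}^{\circ j}(z))\right|^{-2}>0$, so the bound on the modulus of the \emph{sum} immediately yields $\sum_{j}\left|F_{\nu}(\sqrt{n}F_{n}^{\circ j}(z))\right|^{-2}<8\sqrt{n}$, i.e.\ $\left|B_{n}(z)\right|<8n^{-1/2}$, with no information about any individual $\left|F_{n}^{\circ j}(z)\right|$. The term $A_{n}$ is handled by the same device after writing its numerator via $\widehat{G_{\rho}}=m(\nu)+G_{\rho}$ and using $\left|\sqrt{n}F_{n}^{\circ j}(z)\right|\left|G_{\rho}(\sqrt{n}F_{n}^{\circ j}(z))\right|\leq d+m_{2}(\nu)$. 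If you want to complete your proof, I would strongly recommend replacing your bootstrap by this mechanism: it converts the one sum you must control into a telescoping quantity plus a sign argument, which is both shorter and constant-friendly, whereas your route still has to resolve the small-$x$, $2j/n\approx\Re(w^{2})$ regime by hand.
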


\begin{proof}
We first prove (1). Define the error function
\[
\varepsilon_{n}(z)=\sum_{j=0}^{n-1}r_{n}\left(F_{n}^{\circ j}(z)\right),\quad z\in\mathbb{C}^{+},
\]
and assume $n>\max\left\{ (8\sqrt{3c})^{4},(8c\sqrt{c})^{-4}\right\} $.
From the uniform bound (\ref{cbound}), we know that $\left|\varepsilon_{n}(z)\right|\leq2cn^{-1/2}y^{-1}+n^{-1}y^{-2}<3y/2$
for all $z$ on the line
\[
\left\{ x+iy:x\in\mathbb{R},\:y=2\sqrt{c}n^{-1/4}\right\} ,
\]
and hence the square root $\sqrt{z^{2}-2+\varepsilon_{n}(z)}$ is
well-defined for such $n$ and $z$. Furthermore, the computation
(\ref{iteration}) yields the formula 
\[
F_{\mu_{n}}(z)=\sqrt{z^{2}-2+\varepsilon_{n}(z)}.
\]
Therefore, Lemmas 2.1 and 2.2 allow us to conclude that 
\[
\left\Vert C_{\mu_{n}}-C_{\gamma}\right\Vert _{\infty}\leq50\sqrt{y}\leq71\sqrt[4]{c}\,n^{-1/8}.
\]

Next, we prove (2) by improving the bound of $\varepsilon_{n}$ on
the horizontal line
\[
L_{n}=\left\{ x+iy:x\in\mathbb{R},\;y=16\left[d+3(1+m_{2}(\nu))^{2}\right]n^{-1/2}\right\} .
\]

For computational convenience, we introduce the notation 
\[
G_{\tau}(z)=\int_{\mathbb{R}}\frac{1}{z-t}\,d\tau(t)
\]
for any finite Borel measure $\tau$ on $\mathbb{R}$, and recall
the two Nevanlinna representaions
\[
F_{\mu}(z)=z-G_{\nu}(z)\quad\text{and}\quad F_{\nu}(z)=z-m(\nu)-G_{\rho}(z),
\] where $\rho(\mathbb{R})=var(\nu)$.
Furthermore, we set $\widehat{G_{\rho}}(z)=m(\nu)+G_{\rho}(z)$. Note that although the map $G_{\rho}$ may be constantly zero on $\mathbb{C}^{+}$ when $var(\nu)=0$, the map $\widehat{G_{\rho}}$ never vanishes in $\mathbb{C}^{+}$ because $c>0$. Accordingly,
the error $\varepsilon_{n}(z)$ now becomes 
\[
\varepsilon_{n}(z)=\underbrace{-\frac{2}{n}\sum_{j=0}^{n-1}\frac{\widehat{G_{\rho}}\left(\sqrt{n}F_{n}^{\circ j}(z)\right)}{\sqrt{n}F_{n}^{\circ j}(z)-\widehat{G_{\rho}}\left(\sqrt{n}F_{n}^{\circ j}(z)\right)}}_{=A_{n}(z)}+\underbrace{\frac{1}{n}\sum_{j=0}^{n-1}\frac{1}{\left[\sqrt{n}F_{n}^{\circ j}(z)-\widehat{G_{\rho}}\left(\sqrt{n}F_{n}^{\circ j}(z)\right)\right]^{2}}}_{=B_{n}(z)}.
\]

We first estimate $B_{n}(z)$. Note that 
\begin{equation}
\left|F_{n}(z)-z\right|=\left|\frac{1}{\sqrt{n}}\int_{\mathbb{R}}\frac{1}{t-\sqrt{n}z}\,d\nu(t)\right|\leq1,\quad\Im z\geq n^{-1},\quad n\geq1.\label{Bound1}
\end{equation}
Meanwhile, recall the dilation equation 
\[
F_{n}(z)=z-\frac{1}{\sqrt{n}}G_{\nu}(\sqrt{n}z)=z-\frac{1}{n}\frac{1}{z-m(\nu)/\sqrt{n}-G_{\rho}\left(\sqrt{n}z\right)/\sqrt{n}},
\]
and observe from which that 
\begin{equation}
\left|F_{n}(z)-z\right|\leq n^{-1},\quad\Im z\geq n^{-1/2},\quad\left|z\right|\geq2,\quad n>\max\left\{ 4m(\nu)^{2},4m_{2}(\nu)^{2}\right\} .\label{Bound2}
\end{equation}

Now for any $z\in L_{n}$ with $\left|z\right|\geq3$, the inequality
(\ref{Bound2}) yields $\left|F_{n}(z)-z\right|\leq n^{-1}$ for sufficiently
large $n$. It follows that $\left|F_{n}(z)\right|\geq\left|z\right|-n^{-1}>2$
and $\Im F_{n}(z)>\Im z\geq n^{-1/2}$. Therefore one can apply (\ref{Bound2})
to $F_{n}(z)$ to get \[\left|F_{n}^{\circ2}(z)-z\right|\leq\left|F_{n}\left(F_{n}(z)\right)-F_{n}(z)\right|+\left|F_{n}(z)-z\right|\leq2n^{-1}.\] Proceeding inductively, we obtain 
\[
\left|F_{n}^{\circ n}(z)-z\right|\leq1,\quad z\in L_{n},\quad\left|z\right|\geq3,\quad n>\max\left\{ 4m(\nu)^{2},4m_{2}(\nu)^{2}\right\} .
\]
As for $z\in L_{n}$, $\left|z\right|<3$, we introduce the finite
set $J=\left\{ j\in\mathbb{N}\cap[1,n]:\left|F_{n}^{\circ j}(z)\right|\geq3\right\} $
and observe that if $J=\phi$, then the orbit $\{F_{n}^{\circ j}(z)\}_{j=0}^{n}$
stays within the ball $\left\{ |z|<3\right\} $ all the time, whence
\[
\left|F_{n}^{\circ n}(z)-z\right|\leq\left|F_{n}^{\circ n}(z)\right|+\left|z\right|<6.
\]
If $J\neq\phi$, we take $k=\min J$, the first exit time of the orbit
$\{F_{n}^{\circ j}(z)\}_{j=0}^{n}$ out of the ball, then (\ref{Bound1})
and (\ref{Bound2}) together imply that 
\begin{multline*}
\left|F_{n}^{\circ n}(z)-z\right|\leq\left|F_{n}^{\circ(n-k)}\left(F_{n}^{\circ k}(z)\right)-F_{n}^{\circ k}(z)\right|\\
+\left|F_{n}\left(F_{n}^{\circ(k-1)}(z)\right)-F_{n}^{\circ(k-1)}(z)\right|+\left|F_{n}^{\circ(k-1)}(z)-z\right|\leq\frac{n-k}{n}+1+6<8
\end{multline*}
Thus, in all cases, we always have 
\[
\left|F_{n}^{\circ n}(z)-z\right|<8,\quad z\in L_{n},\quad n>\max\left\{ 4m(\nu)^{2},4m_{2}(\nu)^{2}\right\} .
\]

By running an induction argument on $F_{n}(z)=z-n^{-1/2}G_{\nu}(\sqrt{n}z)$,
we get
\[
F_{n}^{\circ n}(z)=z-n^{-1/2}\sum_{j=0}^{n-1}G_{\nu}\left(\sqrt{n}F_{n}^{\circ j}(z)\right).
\]
Hence, the preceding estimate on $F_{n}^{\circ n}(z)$ leads to 
\[
\left|\sum_{j=0}^{n-1}G_{\nu}\left(\sqrt{n}F_{n}^{\circ j}(z)\right)\right|<8\sqrt{n}.
\]
Since $G_{\nu}(z)=1/[z-\widehat{G_{\rho}}(z)]$, $\Im\widehat{G_{\rho}}(z)=\Im G_{\rho}(z)\leq0$,
and $\Im F_{n}(z)>\Im z$, we conclude further that 
\begin{eqnarray}
8\sqrt{n}>\left|\sum_{j=0}^{n-1}G_{\nu}\left(\sqrt{n}F_{n}^{\circ j}(z)\right)\right| & \geq & \Im\sum_{j=0}^{n-1}G_{\nu}\left(\sqrt{n}F_{n}^{\circ j}(z)\right)\label{Gnu}\\
 & = & \sum_{j=0}^{n-1}\frac{\sqrt{n}\Im F_{n}^{\circ j}(z)-\Im G_{\rho}\left(\sqrt{n}F_{n}^{\circ j}(z)\right)}{\left|\sqrt{n}F_{n}^{\circ j}(z)-\widehat{G_{\rho}}\left(\sqrt{n}F_{n}^{\circ j}(z)\right)\right|^{2}}\nonumber \\
 & \geq & \sum_{j=0}^{n-1}\frac{1}{\left|\sqrt{n}F_{n}^{\circ j}(z)-\widehat{G_{\rho}}\left(\sqrt{n}F_{n}^{\circ j}(z)\right)\right|^{2}}\nonumber \\
 & \geq & n\left|B_{n}(z)\right|.\nonumber 
\end{eqnarray}
In summary, we have shown that 
\[
\left|B_{n}(z)\right|<8n^{-1/2},\quad z\in L_{n},\quad n>\max\left\{ 4m(\nu)^{2},4m_{2}(\nu)^{2}\right\} .
\]

Next, we turn to $A_{n}(z)$. For $z\in L_{n}$ and $n>\max\left\{ 4m(\nu)^{2},4m_{2}(\nu)^{2}\right\} $,
(\ref{Gnu}) and the following two inequalities 
\begin{eqnarray*}
\left|G_{\rho}\left(\sqrt{n}F_{n}^{\circ j}(z)\right)\right|\left|\widehat{G_{\rho}}\left(\sqrt{n}F_{n}^{\circ j}(z)\right)\right| & \leq & \frac{\rho(\mathbb{R})}{\sqrt{n}\Im F_{n}^{\circ j}(z)}\left[\left|m(\nu)\right|+\frac{\rho(\mathbb{R})}{\sqrt{n}\Im F_{n}^{\circ j}(z)}\right]\\
 & \leq & m_{2}(\nu)\left|m(\nu)\right|+m_{2}(\nu)^{2},
\end{eqnarray*}
\[
\left|\sqrt{n}F_{n}^{\circ j}(z)\right|\left|G_{\rho}\left(\sqrt{n}F_{n}^{\circ j}(z)\right)\right|=\left|\int_{\mathbb{R}}\frac{t}{\sqrt{n}F_{n}^{\circ j}(z)-t}\,d\rho(t)+\rho(\mathbb{R})\right|\leq d+m_{2}(\nu)
\]
imply that 
\begin{eqnarray*}
\left|\frac{n}{2}A_{n}(z)\right| & \leq & \left|\sum_{j=0}^{n-1}\frac{m(\nu)}{\sqrt{n}F_{n}^{\circ j}(z)-\widehat{G_{\rho}}\left(\sqrt{n}F_{n}^{\circ j}(z)\right)}\right|\\
 &  & +\left|\sum_{j=0}^{n-1}\frac{G_{\rho}\left(\sqrt{n}F_{n}^{\circ j}(z)\right)}{\sqrt{n}F_{n}^{\circ j}(z)-\widehat{G_{\rho}}\left(\sqrt{n}F_{n}^{\circ j}(z)\right)}\right|\\
 & = & \left|m(\nu)\right|\left|\sum_{j=0}^{n-1}G_{\nu}\left(\sqrt{n}F_{n}^{\circ j}(z)\right)\right|\\
 &  & +\left|\sum_{j=0}^{n-1}\frac{G_{\rho}\left(\sqrt{n}F_{n}^{\circ j}(z)\right)\overline{\left[\sqrt{n}F_{n}^{\circ j}(z)-\widehat{G_{\rho}}\left(\sqrt{n}F_{n}^{\circ j}(z)\right)\right]}}{\left|\sqrt{n}F_{n}^{\circ j}(z)-\widehat{G_{\rho}}\left(\sqrt{n}F_{n}^{\circ j}(z)\right)\right|^{2}}\right|\\
 & \leq & 8\left|m(\nu)\right|\sqrt{n}+8\left[d+m_{2}(\nu)+m_{2}(\nu)\left|m(\nu)\right|+m_{2}(\nu)^{2}\right]\sqrt{n}\\
 & \leq & 8\left[d+3(1+m_{2}(\nu))^{2}\right]\sqrt{n}.
\end{eqnarray*}
So we have 
\[
\left|A_{n}(z)\right|\leq16\left[d+3(1+m_{2}(\nu))^{2}\right]n^{-1/2},\quad z\in L_{n},\quad n>\max\left\{ 4m(\nu)^{2},4m_{2}(\nu)^{2}\right\} .
\]

Finally, the hypotheses 
\[
\left|\varepsilon_{n}(z)\right|<24\left[d+3(1+m_{2}(\nu))^{2}\right]n^{-1/2}=3y/2
\]
and $y<(4\sqrt{3})^{-1}$ are satisfied if $z\in L_{n}$ and 
\[
n>\max\left\{ 4m(\nu)^{2},4m_{2}(\nu)^{2},12288\left[d+3(1+m_{2}(\nu))^{2}\right]^{2}\right\}.
\]
Therefore, the convergence rate in (2) follows from Lemmas 2.1 and
2.2 again.

Finally, (3) can be proved by examining the values of $\left|\varepsilon_{n}(z)\right|$
on the line 
\[
L_{n}=\left\{ x+iy:x\in\mathbb{R},\:y=16n^{-1/2}\right\} .
\]
Indeed, in this case we have $\nu=\delta_0$ and
\[
\varepsilon_{n}(z)=\frac{1}{n^{2}}\sum_{j=0}^{n-1}\frac{1}{F_{n}^{\circ j}(z)^{2}},
\]
so that a similar argument as in the proof of (2) shows that $\left|\varepsilon_{n}\right|<8n^{-1/2}$
on $L_{n}$ for all $n\geq1$. We omit the details to avoid repetition. 
\end{proof}
\begin{rem}
We associate each parameter $r>0$ with a probability measure $\nu_{r}$
whose $F$-transform is given by 
\[
F_{\nu_{r}}(z)=r+\sqrt{(z-r)^{2}-2}.
\]
Since $-\Im G_{\nu_{r}}(x+iy)/\pi$ is the Poisson integral of $\nu_{r}$
in $\mathbb{C}^{+}$, it is easy to see from the boundary values $\lim_{y\rightarrow0^{+}}\Im G_{\nu_{r}}(x+iy)$
that $\nu_{r}$ is supported on the disjoint union
\[
\left\{ -\sqrt{2+r^{2}}+r\right\} \cup[-\sqrt{2}+r,\sqrt{2}+r],
\]
and the mass of $\nu_{r}$ at $-\sqrt{2+r^{2}}+r$ is $|r|/(2+r^{2})$.
We now consider the measure $\mu=\nu_{1}$ and note, by induction,
that 
\[
F_{\mu_{n}}(z)=n^{-1/2}+\sqrt{(z-n^{-1/2})^{2}-2}=F_{\nu_{1/\sqrt{n}}}(z).
\]
Since $-\sqrt{2}+(4n)^{-1/2}<-\sqrt{2+n^{-1}}+n^{-1/2}$, it follows
that the interval $(-\infty,-\sqrt{2}+(4n)^{-1/2}]$ is disjoint from
the support of $\mu_{n}$ and hence 
\begin{eqnarray*}
\left\Vert C_{\mu_{n}}-C_{\gamma}\right\Vert _{\infty} & \geq & \left|\mu_{n}\left((-\infty,-\sqrt{2}+(4n)^{-1/2}])\right)-\gamma\left((-\infty,-\sqrt{2}+(4n)^{-1/2}])\right)\right|\\
 & = & \left|\gamma\left([-\sqrt{2},-\sqrt{2}+(4n)^{-1/2}])\right)\right|\\
 & = & \int_{-\sqrt{2}}^{-\sqrt{2}+(4n)^{-1/2}}\frac{dt}{\pi\sqrt{2-t^{2}}}\geq\frac{1}{2\pi}\int_{-\sqrt{2}}^{-\sqrt{2}+(4n)^{-1/2}}\frac{dt}{\sqrt{\sqrt{2}+t}}=\frac{1}{\sqrt{2}\pi n^{1/4}}.
\end{eqnarray*}
This shows that the convergence rate in Theorem 3.1 (2) cannot be
improved without further assumptions on the measure $\mu$. 
\end{rem}

\section{Return Sequence }

In addition to having zero mean and unit variance, we now assume that
$\mu$ is also singular to the Lebesgue measure $\lambda$, so that the
boundary restriction $T_{\mu}x=\lim_{y\rightarrow0^{+}}F_{\mu}(x+iy)\in\mathbb{R}$
for $\lambda$-a.e. $x\in\mathbb{R}$. In \cite{AaronsonPaper}, Aaronson
proved the following formula
\[
a_{n}(T_{\mu})\sim\frac{1}{\pi}\sum_{j=1}^{n}\Im\frac{-1}{F_{\mu}^{\circ j}(z)},\quad z\in\mathbb{C}^{+}.
\]
By taking an appropriate $z$ on the $y$-axis, we will show that the series on the right side is asymptotically equivalent to $\sqrt{2n}/\pi$ as $n\rightarrow \infty$.

Toward this end, we first prove a result that is of some interest in
complex dynamics. It is rather obvious that the sequence $\{F_{\mu}^{\circ j}\}_{j=1}^{\infty}$ tends to its Denjoy-Wolff point $\infty$ pointwisely on $\mathbb{C}^{+}$. Under our assumptions on mean
and variance of $\mu$, this convergence is in fact non-tangential
for points that are sufficiently far from the real line. 
\begin{prop}
Recall the Nevanlinna representation $F_{\mu}(z)=z-G_{\nu}(z)$. Let
$k$ be a positive integer such that $\nu\left([-k,k]\right)\geq0.9$
and define the truncated cone
\[
\Gamma=\left\{ x+iy:\left|x\right|\leq y,\;y\geq2k+2\right\}.
\]
Then $\Gamma$ is an invariant set under the map $F_{\mu}$, that
is, $F_{\mu}\left(\Gamma\right)\subset\Gamma$. Consequently, the
parabolic iterations $F_{\mu}^{\circ j}(z)$ tend to $\infty$ non-tangentially
for every $z\in\Gamma$.
\end{prop}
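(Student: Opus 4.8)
The plan is to estimate $F_\mu(z)$ for $z = x+iy \in \Gamma$ by using the Nevanlinna representation $F_\mu(z) = z - G_\nu(z)$ and showing that the perturbation term $G_\nu(z)$ is both small in modulus and has a controlled effect on the real and imaginary parts. First I would record the trivial bound $|G_\nu(z)| = \left|\int_{\mathbb R}(z-t)^{-1}\,d\nu(t)\right| \le \nu(\mathbb R)/\Im z = 1/y$, which shows that on $\Gamma$ (where $y \ge 2k+2 \ge 4$) we have $|G_\nu(z)| \le 1/4$, a harmless displacement. The point $F_\mu(z) = (x - \Re G_\nu(z)) + i(y - \Im G_\nu(z))$. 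Since $\Im G_\nu(z) = -\int_{\mathbb R} \Im z\,|z-t|^{-2}\,d\nu(t) < 0$, the imaginary part only increases: $\Im F_\mu(z) > y \ge 2k+2$, so the height condition in the definition of $\Gamma$ is preserved automatically. The real work is the cone condition $|\Re F_\mu(z)| \le \Im F_\mu(z)$.

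For that, I would split $G_\nu$ over $|t| \le k$ and $|t| > k$. On the far part, the total mass is at most $0.1$ and $|z - t|^{-1} \le 1/y$, so $\left|\int_{|t|>k}(z-t)^{-1}\,d\nu(t)\right| \le 0.1/y$, contributing at most $0.1/y$ to $|\Re G_\nu(z)|$. On the near part $|t| \le k$, I would bound $|\Re G_\nu(z)|$ by exploiting that $z$ is well inside the cone and far above the strip $|t|\le k$: writing $\Re\frac{1}{z-t} = \frac{x-t}{(x-t)^2 + y^2}$, and using $|x| \le y$, $|t| \le k \le (y-2)/2 < y/2$, one gets $|x - t| \le y + y/2 = 3y/2$ while $(x-t)^2 + y^2 \ge y^2$, hence $\left|\Re\frac{1}{z-t}\right| \le \frac{3y/2}{y^2} = \frac{3}{2y}$, and likewise for the full near-part integral against the mass $\le 1$. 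Combining, $|\Re G_\nu(z)| \le \frac{3}{2y} + \frac{0.1}{y} = \frac{1.6}{y} < \frac{2}{y}$. Therefore $|\Re F_\mu(z)| \le |x| + |\Re G_\nu(z)| \le y + \frac{2}{y}$, while $\Im F_\mu(z) = y + |\Im G_\nu(z)|$; so it suffices to check $|\Im G_\nu(z)| \ge 2/y$, or more safely to redo the bound keeping the two contributions aligned. The cleanest route is actually to observe $\Im F_\mu(z) > y$ and note separately that $|\Re F_\mu(z)| \le y$ will follow if we arrange the estimates tightly — here I expect the honest constants ($2k+2$ versus, say, $2k$) to be chosen precisely to absorb the $O(1/y)$ slack, so the main obstacle is bookkeeping the constants rather than any genuine analytic difficulty.

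Once $F_\mu(\Gamma) \subset \Gamma$ is established, invariance gives $F_\mu^{\circ j}(z) \in \Gamma$ for all $j \ge 1$ and every $z \in \Gamma$; since $\Gamma$ is itself contained in a Stolz-type (truncated) cone with vertex at $\infty$ — namely $\{|\Re w| \le \Im w\}$ intersected with a half-plane $\{\Im w \ge 2k+2\}$ — and since $\{F_\mu^{\circ j}(z)\}$ converges to the Denjoy–Wolff point $\infty$, the orbit tends to $\infty$ while remaining in this cone, which is exactly non-tangential convergence to $\infty$. I would phrase this last step by noting that "$w \to \infty$ non-tangentially in $\mathbb C^+$" means precisely that $w$ stays in a set of the form $\{|\Re w| \le C\,\Im w,\ \Im w \ge \delta\}$ as $\Im w \to \infty$, and $\Gamma$ (with $C = 1$) is such a set; the escape to $\infty$ itself is the already-noted pointwise convergence $F_\mu^{\circ j}(z) \to \infty$, which one can also see directly here from $\Im F_\mu^{\circ j}(z)$ being strictly increasing together with the standard fact that the Denjoy–Wolff point of a parabolic self-map with $F(iy)/iy \to 1$ is $\infty$.
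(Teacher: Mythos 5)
There is a genuine gap in the middle of your argument, and it is not, as you hope, a matter of ``bookkeeping the constants.'' Your strategy is to control $|\Re G_{\nu}(z)|$ and $|\Im G_{\nu}(z)|$ in absolute value and then deduce $|\Re F_{\mu}(z)|\leq\Im F_{\mu}(z)$ from $|\Re F_{\mu}(z)|\leq|x|+|\Re G_{\nu}(z)|$. This cannot work for points near the lateral rays of $\Gamma$. Indeed, your own reduction ``it suffices to check $|\Im G_{\nu}(z)|\geq2/y$'' is impossible: since $(x-t)^{2}+y^{2}\geq y^{2}$, one always has $|\Im G_{\nu}(z)|\leq\nu(\mathbb{R})/y=1/y$. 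More structurally, take $z=y+iy$ on the right edge. Then the cone condition for $F_{\mu}(z)$ reads $x-\Re G_{\nu}(z)\leq y-\Im G_{\nu}(z)$, i.e.\ $\Re G_{\nu}(z)\geq\Im G_{\nu}(z)$, and since $|\Im G_{\nu}(z)|$ may be as small as order $1/y^{2}$ (put most of the mass of $\nu$ far away) while your bound allows $\Re G_{\nu}(z)$ to be negative of size up to $1.6/y$, a pure modulus estimate leaves room for the orbit to exit the cone. No choice of the height $2k+2$ versus $2k$ absorbs this: what is needed is the \emph{sign} of the drift, namely that $\Re G_{\nu}(z)>0$ when $x$ is close to $+y$ (so $F_{\mu}$ pushes the point back toward the interior of the cone), and symmetrically $\Re G_{\nu}(z)<0$ when $x$ is close to $-y$. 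You never extract this, even though you do split $\nu$ at $|t|\leq k$; you only use the split to bound $|\Re G_{\nu}|$.

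This sign information is exactly the heart of the paper's proof, which decomposes $\Gamma=\Gamma_{-}\cup\Gamma_{0}\cup\Gamma_{+}$ with $\Gamma_{\pm}$ the strips of width $1$ along the two rays. On $\Gamma_{+}$ one has $\tfrac12\leq(x-t)/y\leq\tfrac32$ for $|t|\leq k$, hence
\[
\Re G_{\nu}(z)\geq\frac{1}{y}\left[\frac{2}{5}\,\nu([-k,k])-\frac{1}{2}\,\nu(\{|t|>k\})\right]>0,
\]
using $\nu([-k,k])\geq0.9$; then $\Re F_{\mu}(z)=x-\Re G_{\nu}(z)<x\leq y\leq\Im F_{\mu}(z)$ and $\Re F_{\mu}(z)\geq y-1-1/y>-\Im F_{\mu}(z)$, so the right strip maps into $\Gamma$; similarly for $\Gamma_{-}$. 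For the middle region $\Gamma_{0}$ the crude bound you already have, $|F_{\mu}(z)-z|=|G_{\nu}(z)|\leq1/y<1/2$, does suffice, because there the distance to the complement of $\Gamma$ inside $\{\Im w\geq2k+2\}$ is at least $1/\sqrt{2}$, and $\Im F_{\mu}(z)>\Im z$ takes care of the bottom edge. So your raw ingredients (the bound $|G_{\nu}|\leq1/y$, the split at $|t|\leq k$, monotonicity of the imaginary part) are the right ones, but the missing idea is the case analysis near the edges with the sign of $\Re G_{\nu}$. Your final paragraph, deducing non-tangential convergence to $\infty$ from the invariance of $\Gamma$ and the Denjoy--Wolff behavior, is fine and matches the paper.
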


\begin{proof}
We decompose the cone $\Gamma$ into a disjoint union $\Gamma=\Gamma_{-}\cup\Gamma_{0}\cup\Gamma_{+}$,
where 
\[
\Gamma_{-}=\left\{ x+iy\in\Gamma:-y\leq x\leq-y+1\right\} 
\]
and 
\[
\Gamma_{+}=\left\{ x+iy\in\Gamma:y-1\leq x\leq y\right\} .
\]

It follows that $\Re G_{\nu}(z)>0$ for $z\in\Gamma_{+}$ and that
$\Re G_{\nu}(z)<0$ for $z\in\Gamma_{-}$. Indeed, since 
\[
\frac{1}{2}\leq\theta(t)=\frac{x-t}{y}\leq\frac{3}{2}
\]
holds for any $z=x+iy\in\Gamma_{+}$ and $t\in[-k,k]$, we have 
\begin{eqnarray*}
\Re G_{\nu}(z) & = & \frac{1}{y}\int_{\left|t\right|\leq k}\frac{\theta(t)}{\theta(t)^{2}+1}\,d\nu(t)+\frac{1}{y}\int_{\left|t\right|>k}\frac{\theta(t)}{\theta(t)^{2}+1}\,d\nu(t)\\
 & \geq & \frac{1}{y}\int_{\left|t\right|\leq k}\frac{2}{5}\,d\nu(t)-\frac{1}{y}\int_{\left|t\right|>k}\frac{1}{2}\,d\nu(t)\\
 & = & \frac{1}{y}\left[0.9\nu\left([-k,k]\right)-0.5\right]>0.
\end{eqnarray*}
The proof of $\Re G_{\nu}(z)<0$ for $z\in\Gamma_{-}$ is similar.

We now consider any $z=x+iy\in\Gamma$. If $z\in\Gamma_{0}$ then
the distance from $z$ to the exterior $\left\{w\in\mathbb{C}^{+}:\Im w\geq 2k+2,\;w\notin\Gamma\right\}$
is at least $1/\sqrt{2}$. Since $\left|F_{\mu}(z)-z\right|=\left|G_{\nu}(z)\right|\leq y^{-1}<2^{-1}$,
we have $F_{\mu}(z)\in\Gamma$. Next, if $z\in\Gamma_{+}$, we observe
that 
\[
\Re F_{\mu}(z)=x-\Re G_{\nu}(z)<x\leq y\leq\Im F_{\mu}(z)
\]
and 
\[
\Re F_{\mu}(z)=x-\Re G_{\nu}(z)\geq y-1-\frac{1}{y}>\frac{1}{2}>-\Im F_{\mu}(z).
\]
So we also have $F_{\mu}(z)\in\Gamma$ in this case. Finally, the
case of $z\in\Gamma_{-}$ follows similarly from the fact that $\Re G_{\nu}<0$
on $\Gamma_{-}$. 
\end{proof}
We proceed to the main result, from which Theorem 1.2 follows.
\begin{thm}
For $z=(2k+2)i$, we have 
\[
\sum_{j=1}^{n}\Im\frac{-1}{F_{\mu}^{\circ j}(z)}\sim\sqrt{2n}\quad(n\rightarrow\infty),
\]
whence $a_{n}(T_{\mu})\sim\sqrt{2n}/\pi.$
\end{thm}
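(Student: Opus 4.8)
The plan is to analyze the orbit $w_j = F_\mu^{\circ j}(z)$ for $z = (2k+2)i$, which by Proposition 4.1 stays in the truncated cone $\Gamma$, so that $\Im w_j \to \infty$ non-tangentially with $|\Re w_j| \le \Im w_j$. First I would pass to the conjugated iteration $\zeta_j = w_j^2$ (the substitution $\psi_1(z)=z^2$ already used in Section 3): since $F_\mu(w) = w - G_\nu(w)$ with $G_\nu(w) = 1 + O(1/|w|)$ on $\Gamma$, one gets $w_{j+1}^2 = w_j^2 - 2 G_\nu(w_j) + G_\nu(w_j)^2 = w_j^2 - 2 + \eta_j$, where $\eta_j = 2(1 - G_\nu(w_j)) + G_\nu(w_j)^2 \to 0$. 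Summing telescopically, $\zeta_n = z^2 - 2n + \sum_{j=0}^{n-1}\eta_j$. The key point is that $|\eta_j| = O(1/|w_j|) = O(1/\sqrt{|\zeta_j|}) = O(1/\sqrt{n})$ once we know $|\zeta_j| \asymp j$, so $\sum_{j=0}^{n-1}\eta_j = o(n)$, giving $\zeta_n = -2n(1 + o(1))$ and hence $|w_n|^2 = |\zeta_n| \sim 2n$, i.e. $|w_n| \sim \sqrt{2n}$.

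Next I would extract the imaginary part of $-1/w_j$, namely $\Im(-1/w_j) = \Im w_j / |w_j|^2$. The denominator is controlled by the previous paragraph, $|w_j|^2 \sim 2j$, so the task reduces to showing $\Im w_j \sim |w_j|$, equivalently that the orbit becomes asymptotically vertical: $\Re w_j / \Im w_j \to 0$. Since $w_j \in \Gamma$ forces $|\Re w_j| \le \Im w_j$, and since $\arg(\zeta_j) = 2\arg(w_j) \to \pi$ because $\zeta_j = -2j + o(j)$, we get $\arg(w_j) \to \pi/2$, which is exactly $\Re w_j = o(\Im w_j)$ and therefore $\Im w_j \sim |w_j| \sim \sqrt{2j}$. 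Consequently $\Im(-1/w_j) \sim \sqrt{2j}/(2j) = 1/\sqrt{2j}$, and summing,
\[
\sum_{j=1}^n \Im\frac{-1}{F_\mu^{\circ j}(z)} \sim \sum_{j=1}^n \frac{1}{\sqrt{2j}} \sim \frac{1}{\sqrt{2}}\cdot 2\sqrt{n} = \sqrt{2n}
\]
by comparison of the sum with the integral $\int_1^n (2t)^{-1/2}\,dt$.

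The combination with Aaronson's formula $a_n(T_\mu) \sim \pi^{-1}\sum_{j=1}^n \Im(-1/F_\mu^{\circ j}(z))$ then yields $a_n(T_\mu) \sim \sqrt{2n}/\pi$ immediately. I expect the main obstacle to be bootstrapping the estimate $|w_j| \asymp \sqrt{j}$ rigorously: a priori one only knows $|w_j| \ge \Im w_j \ge 2k+2$ is bounded below and $\Im w_j$ is increasing, so one must first establish a crude two-sided bound $c_1 \sqrt{j} \le |w_j| \le c_2\sqrt{j}$ (for instance by showing $\Re(w_j^2) = \Re \zeta_j \to -\infty$ linearly, using that $\Re(-2 + \eta_j) \to -2$, together with the bound $|\Im \zeta_j| = |\Im z^2 + \sum \Im \eta_j|$ growing at most like $\sum |\eta_j| = O(\sqrt{n})$), and only then feed this back to sharpen $\sum \eta_j = o(n)$ and get the exact constant. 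Care is also needed to confirm that $(2k+2)i \in \Gamma$ so that Proposition 4.1 applies to this particular base point, but that is immediate since $\Re((2k+2)i) = 0 \le 2k+2 = \Im((2k+2)i)$.
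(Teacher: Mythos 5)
Your overall route is essentially the paper's: square the orbit $w_j=F_\mu^{\circ j}(z)$, show that $w_{j+1}^2-w_j^2\to -2$ using the invariant cone of Proposition 4.1, deduce $w_j\sim i\sqrt{2j}$, sum $\Im(-1/w_j)\sim(2j)^{-1/2}$, and finish with Aaronson's formula. However, two points in your write-up need repair. First, the expansion is garbled: $G_\nu(w)\to 0$ (not $1$) as $w\to\infty$, and $(w-G_\nu(w))^2=w^2-2\,w\,G_\nu(w)+G_\nu(w)^2$, so the quantity that must tend to $1$ is $w_jG_\nu(w_j)=\int w_j/(w_j-t)\,d\nu(t)$; this is exactly where the cone is used, since $|w_j/(w_j-t)|\le\sqrt{2}$ on $\Gamma$ lets you apply dominated convergence. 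Moreover, your claimed rate $|\eta_j|=O(1/|w_j|)$ is not available under the hypotheses of this theorem: it would require $\int|t|\,d\nu(t)<\infty$, i.e.\ the moment condition of Theorem 3.1, which is not assumed here; all you actually have is $\eta_j\to 0$.

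Second, the bootstrapping you flag as ``the main obstacle'' is both unnecessary and, as sketched, circular: your bound $\sum_j|\eta_j|=O(\sqrt{n})$ presupposes the very estimate $|w_j|\asymp\sqrt{j}$ you are trying to establish. The clean way out (and the paper's way) is a Ces\`{a}ro-mean argument: once $w_{j+1}^2-w_j^2\to-2$, the averages $\frac{1}{j-1}\sum_{k<j}(w_{k+1}^2-w_k^2)$ converge to $-2$, hence $w_j^2/j\to-2$ with no a priori two-sided bound on $|w_j|$ needed at all (equivalently, $\eta_j\to0$ alone gives $\sum_{j<n}\eta_j=o(n)$, which already yields $w_n^2=-2n+o(n)$). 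With that substitution your argument goes through; the remaining steps (passing from $w_j^2/j\to-2$ to $w_j/\sqrt{j}\to i\sqrt{2}$ using $w_j\in\mathbb{C}^+$, the term-by-term asymptotics of the divergent positive series, and Aaronson's formula) match the paper.
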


\begin{proof}
Denoting $z_{j}=F_{\mu}^{\circ j}(z)$ for $j\geq1$, the Nevanlinna
representation of $F_{\mu}$ implies 
\[
z_{j+1}-z_{j}=\int_{\mathbb{R}}\frac{1}{t-z_{j}}\,d\nu(t),
\]
so that 
\[
\left|\frac{z_{j+1}}{z_{j}}-1\right|\leq\frac{1}{\left|z_{j}\right|}\int_{\mathbb{R}}\frac{1}{|t-z_{j}|}\,d\nu(t)\leq\frac{1}{|z_{j}|(2k+2)}\rightarrow0\quad(j\rightarrow\infty).
\]

Proposition 4.1 yields
\[
\left|\frac{z_{j}}{t-z_{j}}\right|\leq\sqrt{1+\left(\frac{\Re z_{j}}{\Im z_{j}}\right)^{2}}\leq\sqrt{2},\quad t\in\mathbb{R},
\]
and hence the dominated convergence theorem shows further that 
\[
\lim_{j\rightarrow\infty}\int_{\mathbb{R}}\frac{z_{j}}{t-z_{j}}\,d\nu(t)=-1
\]

Now we have 
\[
z_{j+1}^{2}-z_{j}^{2}=\left[\frac{z_{j+1}}{z_{j}}+1\right]\int_{\mathbb{R}}\frac{z_{j}}{t-z_{j}}\,d\nu(t)\rightarrow-2
\]
as $j\rightarrow\infty$. Consequently, we obtain the convergence
of the averages 
\[
\frac{1}{j-1}\sum_{k=1}^{j-1}\left(z_{k+1}^{2}-z_{k}^{2}\right)\rightarrow-2\qquad(j\rightarrow\infty).
\]
Observe next that 
\[
\frac{z_{j}^{2}}{j}=\frac{z_{1}^{2}}{j}+\frac{j-1}{j}\left[\frac{1}{j-1}\sum_{k=1}^{j-1}\left(z_{k+1}^{2}-z_{k}^{2}\right)\right],\quad j\geq2,
\]
and so we finally get 
\[
\lim_{j\rightarrow\infty}\frac{z_{j}^{2}}{j}=-2.
\]

Therefore, for sufficiently large $j$, we can apply the analytic
square root $\sqrt{\cdot}$ to $z_{j}^{2}/j$, and the continuity
of $\sqrt{\cdot}$ on $(-\infty,0)$ says that 
\[
\lim_{j\rightarrow\infty}\frac{z_{j}}{\sqrt{j}}=i\sqrt{2}.
\]
This shows that 
\[
\Im\frac{-1}{z_{j}}\sim\frac{1}{\sqrt{2j}}\qquad(j\rightarrow\infty),
\]
and naturally, 
\[
\sum_{j=1}^{n}\Im\frac{-1}{z_{j}}\sim\sqrt{2n}\qquad(n\rightarrow\infty).
\]
\end{proof}
We conclude this paper with the following remarks. First, a simple normalization
argument shows that
\[
\sum_{j=1}^{n}\Im\frac{-1}{F_{\mu}^{\circ j}((2k+2)i)}\sim\sqrt{\frac{2n}{var(\mu)}}\quad(n\rightarrow\infty)
\]
for sufficiently large $k\in\mathbb{N}$, as long as $m(\mu)=0$ and
$var(\mu)\in(0,+\infty)$. Also, the singularity of $\mu$ to the
Lebesgue measure plays no role in our proofs of Proposition 4.1 and
Theorem 4.2. Finally, we mention that Theorem 4.2 also implies other
ergodic properties of $T_{\mu}$ such as log-lower boundedness and
quasi-finiteness (cf. \cite[Proposition 3.1]{AaronsonPark}). 

\section*{acknowledgement}

The third author was supported by the NSERC Canada Discovery Grant
RGPIN-2016-03796.

\end{document}